
\documentclass[12pt,reqno]{amsart}

\voffset=-1.4mm
\oddsidemargin=17pt \evensidemargin=17pt
\headheight=9pt     \topmargin=26pt
\textheight=576pt   \textwidth=440.8pt
\parskip=0pt plus 4pt

\usepackage{amssymb}
\usepackage{bm}
\usepackage{graphicx}
\usepackage{psfrag}
\usepackage[usenames,dvipsnames]{xcolor}
\usepackage{enumerate}
\usepackage{subcaption}
\captionsetup[subfigure]{labelfont=rm} 
\usepackage{url}

\usepackage{algpseudocode}

\usepackage{mathtools}

\usepackage{xy}
\input xy
\xyoption{all}

\newcommand{\excise}[1]{}

\newtheorem{thm}{Theorem}[section]
\newtheorem{lemma}[thm]{Lemma}

\newtheorem{cor}[thm]{Corollary}
\newtheorem{prop}[thm]{Proposition}
\newtheorem{conj}[thm]{Conjecture}

\theoremstyle{definition}

\newtheorem{example}[thm]{Example}
\newtheorem{remark}[thm]{Remark}
\newtheorem{definition}[thm]{Definition}

\newtheorem{question}[thm]{Question}
\newtheorem{prob}[thm]{Problem}

\numberwithin{equation}{section}



\newcommand{\ring}[1]{\ensuremath{\mathbb{#1}}}


\newcommand\RR{\ring{R}}

\newcommand\ZZ{\ring{Z}}





\def\ol#1{{\overline {#1}}}

\DeclareMathOperator\ann{ann} 
 %





\newcommand\R{{\mathbb R}}

\begin{document}

\mbox{}
\title[The geometry and combinatorics of discrete line segment hypergraphs]{The geometry and combinatorics of \\ discrete line segment hypergraphs}

\author[D.~Oliveros]{Deborah Oliveros}
\address{Instituto de Matem\'aticas\\Universidad Nacional Aut\'onoma de M\'exico\\}
\email{dolivero@matem.unam.mx}

\author[C.~O'Neill]{Christopher O'Neill}
\address{Mathematics Department\\San Diego State University\\San Diego, CA 92182}
\email{cdoneill@sdsu.edu}

\author[S.~Zerbib]{Shira Zerbib}
\address{Department of Mathematics\\University of Michigan\\Ann Arbor, MI 48109}
\email{zerbib@umich.edu}




\begin{abstract}
An \emph{$r$-segment hypergraph} $H$ is a hypergraph whose edges consist of $r$~consecutive integer points on line segments in $\mathbb{R}^2$.  In this paper, we bound the chromatic number $\chi(H)$ and covering number $\tau(H)$ of hypergraphs in this family, uncovering several interesting geometric properties in the process.  We conjecture that for $r \ge 3$, the covering number $\tau(H)$ is at most $(r - 1)\nu(H)$, where $\nu(H)$ denotes the matching number of $H$.  We prove our conjecture in the case where $\nu(H) = 1$, and provide improved (in fact, optimal) bounds on $\tau(H)$ for $r \le 5$.  We also provide sharp bounds on the chromatic number $\chi(H)$ in terms of $r$, and use them to prove two fractional versions of our conjecture.  
\end{abstract}

\maketitle


\section{Introduction}
\label{sec:intro}

The combinatorial and geometrical properties of families of sets in Euclidean space has been a thriving area of research in discrete geometry and combinatorics for many decades.  It was initiated by the classical theorem of Helly \cite{helly}, asserting that if $\mathcal F$ is a family of convex sets in $\RR^d$ in which every $d + 1$ members have nonempty intersection, then some point lies in every set in $\mathcal F$.  This landmark result led to the study of \emph{covering numbers} (also sometimes called \emph{piercing numbers} or \emph{hitting numbers}) of families of sets, that is, the minimal number of points needed to ``pierce'' every set in a family given some local intersection property \cite{AK, Eckhoff, HD, 432, KST}.  Helly's theorem is sharp in general, but improved bounds on covering numbers can be obtained if one restricts the convex sets in question.  Indeed, extensive work has been done on the covering numbers of families of disks, boxes, line segments, homothets of centrally symmetric bodies, and other convex sets \cite{CSZ, Danzer, DanzGrum, GL, HMO,  GZ,  kaiser, karasev, KT}, as well as in discrete settings, where the vertex set lies in the integer lattice (of some fixed semi-algebraic group) \cite{AverWais, DLOR,  Doi, halman}. Determining covering numbers in discrete arrangements is often harder than in the continuous case, as the covering sets of points are required to lie in the lattice as well.  

Covering problems have also been studied in the combinatorial setting of \emph{hypergraphs}, that is, a pair $H = (V,E)$ of a \emph{vertex set} $V$ and an \emph{edge set} $E \subset 2^V$.  In this context, one generally wants to obtain an upper bound on the \emph{covering number} $\tau(H)$  in terms of its \emph{matching number} $\nu(H)$ (that is, the maximum number of pairwise disjoint edges in $H$).  If $H$ is an \emph{$r$-uniform} hypergraph (that is, if every edge has exactly $r$ vertices), then one such bound is $\tau(H) \le r\nu(H)$, as the union of the edges in any maximal matching is a cover of $H$ of size $r\nu(H)$ (we will refer to this bound as the \emph{trivial bound}).  

The trivial bound cannot be improved in general, as is exemplified by the disjoint union of copies of the projective plane of uniformity $r$.  For certain families of $r$-uniform hypergraphs, however, the trivial bound can be improved.  K\"onig's theorem \cite{konig}, for instance, asserts that $\tau(H) = \nu(H)$ in the case where $r = 2$ and $H$ is a bipartite graph.  A longstanding conjecture of Ryser (see \cite{aharoni}) generalizes K\"onig's theorem and states that any $r$-partite hypergraph $H$ satisfies $\tau(H) \le (r - 1)\nu(H)$.  Ryser's conjecture was proved by Aharoni \cite{aharoni} in the $r = 3$ case, and remains open for $r \ge 4$; see also~\cite{AZ, haxell, tuza}.  

In this paper, we introduce a new family of $r$-uniform hypergraphs whose edges are comprised of consecutive integer points on a line.  The covering number of such a hypergraph coincides with the notion of covering number in discrete versions of Helly-type theorems, as each edge is the intersection of a convex set (in this case, a line segment) with the integer lattice, and any covering set consists only of lattice points.  

\begin{definition}\label{d:maindefinition}
Let $r \ge 2$ be an integer. A hypergraph $H = (V,E)$ on vertex set $V \subset \mathbb{Z}^2$ is called an \emph{$r$-segment hypergraph} if
\begin{enumerate}[(i)]
\item
every edge in $E$ consists of $r$ consecutive integer points on some line in $\mathbb{R}^2$ and 

\item
every line in $\mathbb{R}^2$ contains at most one edge of $H$. 

\end{enumerate}
\end{definition}

This new family of hypergraphs has proven to be a fruitful source for challenging combinatorial questions, the answers to which have yielded surprising geometrical properties.  Much of the work in this paper centers around the following conjectured bound on the covering numbers of $r$-segment hypergraphs, one which matches the Ryser conjecture for $r$-partite hypergraphs.  

\begin{conj}\label{conj:mainconjecture}
If $H$ is an $r$-segment hypergraph with $r \ge 3$, then 
$$\tau(H) \le (r - 1)\nu(H).$$
\end{conj}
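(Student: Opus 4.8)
The plan is to begin from the \emph{trivial bound} $\tau(H) \le r\,\nu(H)$, obtained by taking the union of the edges of a maximal matching, and to argue that the geometric constraints of Definition~\ref{d:maindefinition} --- that distinct edges lie on distinct lines, and each edge is a block of $r$ consecutive lattice points --- always allow us to save one point per matching edge. In spirit this mirrors Ryser's conjecture and Aharoni's $r = 3$ theorem, but rather than relying on the topological connectivity of independence complexes I would try to exploit the planar geometry directly, since two edges on distinct lines meet in at most one lattice point.

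The natural core is the intersecting case $\nu(H) = 1$, where the claim reduces to $\tau(H) \le r - 1$. Here any single edge $e_0 = \{p_1, \dots, p_r\}$ already covers $H$, because every other edge must meet $e_0$ and can do so in only one lattice point; this reproves $\tau \le r$. To improve it to $r - 1$, I would first normalize: applying a transformation from $\mathrm{GL}_2(\mathbb{Z})$ together with a translation (which preserves both the lattice and the notion of consecutive integer points), place $e_0$ on the $x$-axis at $p_i = (i,0)$. The remaining edges then partition into pencils $C_1, \dots, C_r$, where $C_i$ consists of the edges crossing the axis at $p_i$, and each pencil is an intersecting family pierced by its apex $p_i$. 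The crux is to show that one apex is dispensable --- that, after possibly replacing it by a single other lattice point, the edges of some $C_i$ together with $e_0$ can be pierced using only the remaining $r - 1$ points. I would attack this through a Helly-type analysis of the two extreme pencils $C_1$ and $C_r$: since every edge of $C_1$ also meets every edge of $C_r$, the geometry should force a common transversal structure on one side, and I expect the bulk of the work to be a careful case analysis of the slopes and of the up/down orientation of edges relative to the axis.

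To pass from $\nu(H) = 1$ to general $\nu(H)$ I would attempt an induction that deletes a single matching edge together with a controlled set of interfering edges, aiming to lower $\nu$ by one while lowering $\tau$ by at most $r - 1$. I fully expect this to be the \textbf{main obstacle}, and for the same reason Ryser's conjecture remains open for $r \ge 4$: covers do not localize, since a lattice point chosen to pierce edges near one matching edge may be compelled by edges that simultaneously reach other matching edges, so a naive induction leaks. As a more robust fallback I would prove \emph{fractional} versions --- for instance $\tau(H) \le (r-1)\nu^*(H)$ and $\tau^*(H) \le (r-1)\nu(H)$ --- by combining LP duality ($\tau^* = \nu^*$) with the sharp upper bounds on the chromatic number $\chi(H)$ in terms of $r$, using a proper coloring to split the edge set into a bounded number of intersecting subfamilies to which the $\nu = 1$ result applies.

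Finally, for the small uniformities $r \le 5$ I would aim for exact (optimal) bounds through an exhaustive but finite geometric case analysis of the possible slope and parity configurations of $r$ consecutive lattice points, which is tractable precisely because $r$ is small; I would expect the resulting constructions to also pin down the extremal examples showing the bounds cannot be improved.
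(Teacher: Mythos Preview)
First, note that Conjecture~\ref{conj:mainconjecture} is stated in the paper \emph{as a conjecture}; the paper does not prove it in full, and you yourself flag the inductive step from $\nu=1$ to general $\nu$ as the main obstacle. So at the top level your proposal is not so much a proof as a research plan, and its scope (prove the intersecting case, prove fractional relaxations, give sharper bounds for small $r$, leave the general case open) matches exactly the partial results the paper obtains.

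Where your plan genuinely differs is in the intersecting case $\nu(H)=1$. Your approach is to fix a base edge $e_0$ on the $x$-axis, sort the remaining edges into pencils $C_1,\dots,C_r$ through the apices $p_1,\dots,p_r$, and then argue via a Helly-type analysis of the extreme pencils $C_1,C_r$ that one apex can be dropped. This is a reasonable first instinct, but as written it is only a hope: you do not supply the mechanism that forces one pencil to be redundant, and in fact the edges of $C_1$ and $C_r$ can interleave in complicated ways without any obvious common transversal. The paper's argument is quite different and is what makes the $\nu=1$ case actually go through: it proves that any intersecting $r$-segment hypergraph with $r\ge 3$ has an \emph{isolated vertex} (a vertex lying in exactly one edge), from which $\tau\le r-1$ is immediate. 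The key device is to take three edges forming a triangle of \emph{minimal area} and use lattice-area comparisons (triangles of equal area sharing a base, and parallels forced by adjacent lattice points) to derive a contradiction if no vertex is isolated. This minimal-triangle idea is the missing ingredient in your sketch.

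For the fractional statements your outline also diverges from the paper. The paper proves $\tau^*(H)\le (r-1)\nu(H)$ not via coloring but via F\"uredi's theorem: an $r$-segment hypergraph cannot contain a copy of the $r$-uniform projective plane (because the isolated-vertex result forbids it), and F\"uredi then gives $\nu\ge \nu^*/(r-1)=\tau^*/(r-1)$. The other direction, $\tau(H)\le (r-1)\nu^*(H)$, is obtained by a minimal-counterexample argument using complementary slackness together with the chromatic bound $\tau(H)\le\tfrac{2}{3}|V|$. Your proposed route --- ``split the edge set into a bounded number of intersecting subfamilies via a proper coloring'' --- does not quite make sense as stated, since a proper \emph{vertex} coloring does not partition the edges into intersecting families; you would need a different decomposition, and in any case the paper's use of F\"uredi's theorem is the step you are missing.
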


Our main results in this direction are as follows.  

\begin{enumerate}[(a)]
\item 
We prove Conjecture~\ref{conj:mainconjecture} in the case where $H$ is \emph{intersecting}, i.e.\ when $\nu(H) = 1$ (Theorem~\ref{t:isolatedvertex}).  Our proof is highly geometric, using areas of bounded regions to argue that any such hypergraph must have a vertex that lies in only one edge.  

\item 
We prove two ``fractional'' versions of Conjecture~\ref{conj:mainconjecture} (Theorem~\ref{t:fractional}), a common alternative in the literature when bounding the covering number of a particular hypergraph family proves difficult \cite{furedi, Krivelevich, lovaszpartite}.  More specifically, we show that for any $r$-segment hypergraph $H$, the ratios $\tau(H)/\nu^*(H)$ and $\tau^*(H)/\nu(H)$ do not exceed $r - 1$, where $\nu^*(H)$ and $\tau^*(H)$ are the fractional matching number and fractional covering number, respectively (formal definitions are given in Section~\ref{sec:fractional}).  

\end{enumerate}

The remaining results in this paper are related to those above.  In Section~\ref{sec:coloring}, we provide sharp bounds in terms of $r$ on the \emph{chromatic numbers} of $r$-segment hypergraphs (that is, the minimal number of colors needed to color the vertices so that no edge is monochromatic).  
Although initially intended as a means to prove the fractional results in Section~\ref{sec:fractional}, there were enough interesting tangential questions concerning hypergraph colorings to warrant independent investigation, 
a testament to the wealth of interesting questions concerning $r$-segment hypergraphs.  
Additionally, we dedicate much of Section~\ref{sec:sharperbounds} to refining the bound on $\tau(H)$ in the case where $H$ is an intersecting $r$-segment hypergraph.  We conjecture a sharper bound in this case, and derive a surprising geometric lemma to verify it in the case $r \le 5$.

\section{Coloring $r$-segment hypergraphs}
\label{sec:coloring}

A hypergraph $H$ is \emph{$k$-colorable} for a positive integer $k$ if one can assign one of $k$ colors to each vertex in such a way that no edge in $H$ is monochromatic, and the \emph{chromatic number} of $H$, denoted $\chi(H)$, is the least number $k$ such that $H$ is $k$-colorable.  

Coloring problems in geometric settings have a long and vibrant history.  Particularly noteworthy examples include the 4-color theorem for planar graphs and the question of how to color every point in $\RR^2$ so that no two points that are unit distance apart are colored by the same color \cite{degrey}.  
Chromatic numbers also arise in the study of intersection graphs of families of sets in $\RR^d$, where, for instance, covering numbers of boxes \cite{GL,Eppstein,PT} have been characterized in terms of the chromatic number of complements of odd cycles \cite{BaOl,woe}.  Chromatic numbers of certain families of $r$-uniform hypergraphs are also of interest, such as the family of Kneser hypergraphs $KG^r(n,k)$, whose vertices are the $k$-element subsets of $\{1, \ldots, n\}$ and whose edges are collections of $r$ pairwise disjoint subsets \cite{AFL,kneser, lovaszkneser}.

In this section, we give an optimal upper bound on the chromatic numbers of \mbox{$r$-segment} hypergraphs in terms of $r$ (Theorem~\ref{t:weakcoloring}).  Our proof uses geometric projections to explicitly construct proper colorings (Proposition~\ref{p:coloringprojection}).  We also demonstrate that the bounds in Theorem~\ref{t:weakcoloring} are sharp (Examples~\ref{e:coloringr2sharp} and~\ref{e:coloringr3sharp}).  

Like many other hypergraph families in the literature \cite{GMA, NogaAlon, MPa}, $r$-uniform hypergraphs are \emph{linear}, meaning any two edges intersect in at most one vertex.  It is worth noting that in general, the problem of deciding whether an $r$-uniform hypergraph $H$ is $k$-colorable is NP-hard, even in the case where $H$ is linear \cite{brown, Lo, PR}. 

For each integer $k \ge 2$, let $\ZZ_k$ denote the additive group $\ZZ/k\ZZ = \{\ol 0, \ol 1, \ldots, \ol{k-1}\}$.  In what follows, denote by $Z_k$ the $k$-uniform hypergraph with vertex set $V(Z_k) = \ZZ_k^2$ and edge set
$$E(Z_k) = \{\ol u + \ZZ_k \ol v : u, v \in \ZZ_{k}^2, \, |\ol u + \ZZ_k \ol v| = k\}.$$

\begin{prop}\label{p:coloringprojection}
Fix integers $r \ge k \ge 2$ and an $r$-segment hypergraph $H$.  The image of any edge of $H$ under the projection  
$$\begin{array}{rcl}
P_k:\ZZ^2 &\longrightarrow& \ZZ_k^2 \\
(v_1, v_2) &\longmapsto& (\ol v_1, \ol v_2)
\end{array}$$
is an edge in $Z_k$.  In particular, $\chi(H) \le \chi(Z_k)$.  
\end{prop}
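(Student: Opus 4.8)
The plan is to parametrize an arbitrary edge $e$ of $H$ and follow it through $P_k$. Write $e = \{w + id : 0 \le i \le r-1\}$, where $w \in \ZZ^2$ is an endpoint and $d = (d_1, d_2) \in \ZZ^2$ is the step between consecutive points of $e$. Because the vertices of $e$ are \emph{consecutive} integer points on their common line, the vector $d$ must be primitive, that is $\gcd(d_1, d_2) = 1$; otherwise $w + d/\gcd(d_1,d_2)$ would be an integer point strictly between $w$ and $w + d$. Applying $P_k$ coordinatewise gives $P_k(e) = \{\ol w + i \ol d : 0 \le i \le r - 1\}$, so the whole question reduces to understanding the cyclic set generated by $\ol d$ inside the group $\ZZ_k^2$.

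The crux is to show that $\ol d$ has order exactly $k$ in $\ZZ_k^2$, and here is where primitivity enters in an essential way. I would argue one prime at a time: fix a prime $p$ with $p^a \,\|\, k$. Since $\gcd(d_1,d_2) = 1$, the prime $p$ fails to divide at least one coordinate, say $p \nmid d_j$; then $\gcd(d_j, k)$ is coprime to $p$, so the order $k/\gcd(d_j,k)$ of $\ol{d_j}$ in $\ZZ_k$ retains the full factor $p^a$. As $\mathrm{ord}(\ol d) = \mathrm{lcm}(\mathrm{ord}(\ol{d_1}), \mathrm{ord}(\ol{d_2}))$, its $p$-part is therefore exactly $p^a$; letting $p$ range over all primes dividing $k$ forces $\mathrm{ord}(\ol d) = k$. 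Consequently the elements $\ol w, \ol w + \ol d, \ldots, \ol w + (k-1)\ol d$ are pairwise distinct and the list is $k$-periodic in $i$. Since $r \ge k$, the index range $0 \le i \le r - 1$ already sweeps out all $k$ of them (merely repeating afterward), so $P_k(e) = \ol w + \ZZ_k \ol d$ has cardinality exactly $k$. By definition this is an edge of $Z_k$, proving the first assertion.

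For the chromatic inequality I would pull back an optimal coloring. Let $c$ be a proper coloring of $Z_k$ using $\chi(Z_k)$ colors and define $c'(x) = c(P_k(x))$ for every vertex $x \in V(H)$. Given any edge $e$ of $H$, its image $P_k(e)$ is an edge of $Z_k$ by the first part, hence is not monochromatic under $c$; choosing two vertices of $P_k(e)$ of distinct color and lifting them through $P_k$ produces two vertices of $e$ with distinct $c'$-color. Thus no edge of $H$ is monochromatic under $c'$, so $c'$ is a proper coloring of $H$ with $\chi(Z_k)$ colors and $\chi(H) \le \chi(Z_k)$.

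The only genuinely nontrivial step is the order computation; everything else is bookkeeping. Its delicacy is that $\ol d$ can easily fail to have order $k$ when $d$ is not primitive --- for instance $d = (0,2)$ gives $\mathrm{ord}(\ol d) = 2$ in $\ZZ_4^2$ --- so the proof must exploit the consecutiveness hypothesis, and the cleanest way to see it do so is the prime-by-prime reduction above. I would take care that the coordinate coprime to $p$ genuinely contributes the full $p$-power to the least common multiple, and that $r \ge k$ is used precisely to guarantee the image is not a shorter cyclic set.
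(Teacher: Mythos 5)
Your proof is correct and takes essentially the same route as the paper's: both observe that consecutiveness forces the step vector to be primitive ($\gcd(d_1,d_2)=1$) and then show that $\ol d$ has order exactly $k$ in $\ZZ_k^2$, your prime-by-prime lcm-of-orders computation being an expanded version of the paper's one-line remark that $\ann(\ol d) = \ann(\ol d_1) \cap \ann(\ol d_2)$ is trivial precisely when no prime divides both coordinates. The pulled-back coloring argument you spell out at the end is exactly what the paper leaves implicit in its ``in particular.''
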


\begin{proof}
Fix an edge $e \in E(H)$ with endpoint $u \in \ZZ^2$ and vector $v = (v_1, v_2) \in \ZZ^2$ pointing from $u$ to the next integer point in $e$.  Since $e = u + \{0,1,\dots, r-1\}v$, the image of $e$ under $P_k$ is $\ol u + \ZZ_k \ol v$, so to complete the proof, we claim the images of any $k$ sequential vertices in $e$ are distinct vertices of $Z_k$, or, equivalently, that no nonzero element of $\ZZ_k$ annihilates $\ol v$.  The fact that $u$ and $u + v$ are two consecutive integer points on a line in $\R^2$ implies $\gcd(v_1,v_2) = 1$, so the claim follows from the fact that $\ann(\ol v) = \ann(\ol v_1) \cap \ann(\ol v_2)$ has nonzero elements only if $\gcd(v_1, v_2) > 1$. 
\end{proof}

We are now ready to prove our main theorem on colorations. 

\begin{thm}\label{t:weakcoloring}
Let $H$ be an $r$-segment hypergraph.
\begin{enumerate}[(a)]
\item \label{t:weakcoloring:r2}
If $r=2$ then $\chi(H) \le 4$.

\item \label{t:weakcoloring:r3}
If $r=3$ then $\chi(H) \le 3$.

\item \label{t:weakcoloring:r4}
If $r \ge 4$ then $\chi(H) = 2$.
\end{enumerate}
Additionally, the inequalities in parts~\eqref{t:weakcoloring:r2} and~\eqref{t:weakcoloring:r3} are sharp.  
\end{thm}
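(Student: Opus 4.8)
The plan is to deduce all three upper bounds from Proposition~\ref{p:coloringprojection}, which reduces bounding $\chi(H)$ to computing $\chi(Z_k)$ for a single well-chosen $k$ with $2 \le k \le r$. For part~\eqref{t:weakcoloring:r2} I take $k = 2$: the vertex set of $Z_2$ is $\ZZ_2^2$, and for any two distinct vertices $\ol u, \ol w$ the vector $\ol v = \ol w - \ol u$ is nonzero, so $\{\ol u, \ol w\} = \ol u + \ZZ_2 \ol v$ is an edge. Hence $Z_2$ is the complete graph on four vertices and $\chi(Z_2) = 4$, giving $\chi(H) \le 4$. For part~\eqref{t:weakcoloring:r3} I take $k = 3$. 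Since $\ZZ_3$ is a field, every nonzero $\ol v$ has trivial annihilator, so $Z_3$ is exactly the point--line incidence hypergraph of the affine plane of order $3$ (nine vertices, twelve edges of size three). I exhibit a proper $3$-coloring by partitioning $\ZZ_3^2$ into three triples none of which is a line (for instance $\{(0,0),(1,0),(1,1)\}$, $\{(2,0),(0,1),(2,2)\}$, $\{(2,1),(0,2),(1,2)\}$); a direct check against the twelve lines confirms properness, so $\chi(Z_3) \le 3$ and $\chi(H) \le 3$.

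For part~\eqref{t:weakcoloring:r4} I take $k = 4$ (permissible since $r \ge 4$) and must show $\chi(Z_4) = 2$; together with $\chi(H) \ge 2$, which holds because every edge has $r \ge 2$ vertices, this yields $\chi(H) = 2$. The content is a single explicit $2$-coloring of $\ZZ_4^2$, and I expect this to be the first genuine obstacle. Any coloring depending only on the parities $(i \bmod 2, j \bmod 2)$ fails: the row and column constraints force the checkerboard pattern $(i \bmod 2)\oplus(j \bmod 2)$ (here $\oplus$ is addition modulo $2$), which is then monochromatic along the diagonal direction $\ol v = (1,1)$. A genuinely period-$4$ coloring is therefore required, such as $c(i,j) = f(i) \oplus \lfloor j/2 \rfloor$ with $f(0) = f(2) = f(3) = 1$ and $f(1) = 0$ (equivalently, the block checkerboard $\lfloor i/2 \rfloor \oplus \lfloor j/2 \rfloor$ with the colors of the column $i \equiv 0$ reversed). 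Verifying it is a finite check of the $24$ edges of $Z_4$: the four rows, four columns, and the four parallel lines in each of the four remaining direction classes $(1,1)$, $(1,3)$, $(1,2)$, and $(2,1)$, none of which turns out to be monochromatic.

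It remains to prove sharpness. For part~\eqref{t:weakcoloring:r2} I would realize $K_4$ as a $2$-segment hypergraph (Example~\ref{e:coloringr2sharp}): four integer points in general position with pairwise primitive connecting vectors and no three collinear---for instance $(0,0),(1,0),(3,1),(2,3)$---have all six segments as edges, no two of which lie on a common line, so the chromatic number is $\chi(K_4) = 4$. Sharpness for part~\eqref{t:weakcoloring:r3} is the main difficulty: I must produce a $3$-segment hypergraph that is not $2$-colorable. Two natural candidates are ruled out. Realizing the non-$2$-colorable hypergraph $Z_3$ directly as genuine collinear triples is impossible, since that would realize the Hesse configuration $(9_4, 12_3)$ by twelve real lines, which cannot be done over $\RR$; and the one-dimensional van der Waerden system of $3$-term progressions in $\{1,\dots,9\}$ is forbidden by condition~(ii) of Definition~\ref{d:maindefinition}, as all of its edges share a single line. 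The crux is therefore to spread a non-$2$-colorable system of $3$-term arithmetic progressions across \emph{distinct} lines in $\ZZ^2$; I would construct such a configuration of integer points explicitly and verify non-$2$-colorability by a finite case analysis that tracks the forced colors (Example~\ref{e:coloringr3sharp}). Reconciling this failure of property~B with the geometric constraints of Definition~\ref{d:maindefinition} is where I expect the real work to lie.
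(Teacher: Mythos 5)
Your upper bounds are correct and follow exactly the paper's route: reduce via Proposition~\ref{p:coloringprojection} to coloring $Z_2$, $Z_3$, $Z_4$, note $Z_2 = K_4$, and exhibit explicit proper colorings of $Z_3$ and $Z_4$. I checked your partition of $\ZZ_3^2$ against all twelve lines of the affine plane of order $3$, and your coloring $c(i,j) = f(i) \oplus \lfloor j/2 \rfloor$ against all $24$ edges of $Z_4$ (four cosets in each of the six direction classes); both are proper. The paper does the same thing, merely displaying its colorings in figures rather than by formula. Your sharpness example for $r = 2$ (the points $(0,0),(1,0),(3,1),(2,3)$, pairwise primitive differences, no three collinear) is also correct and is essentially the paper's Example~\ref{e:coloringr2sharp}.

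The genuine gap is sharpness for $r = 3$. You correctly diagnose the obstruction --- the Hesse configuration $Z_3$ cannot be drawn with straight lines in $\RR^2$, and the one-dimensional van der Waerden system violates condition~(ii) of Definition~\ref{d:maindefinition} --- but then you stop at ``I would construct such a configuration of integer points explicitly and verify non-$2$-colorability by a finite case analysis.'' No configuration is produced, and producing one is the real content of this half of the theorem; it is not a routine finite check that can be waved at. The paper's Example~\ref{e:coloringr3sharp} requires a nontrivial idea precisely to escape the planar obstruction you identified: it builds a $3$-uniform hypergraph $C$ on the $27$ points of $\{0,1,2\}^3 \subset \ZZ^3$ with $40$ collinear triples, verifies by brute-force computation that $C$ is not $2$-colorable, and then projects to $\ZZ^2$ by $(x,y,z) \mapsto (x + 17z,\, y + 29z)$, with the coefficients $17$ and $29$ chosen so that every image edge still consists of three consecutive lattice points and distinct edges land on distinct lines. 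Without this construction (or a substitute), the claim that part~\eqref{t:weakcoloring:r3} is sharp --- and hence the final assertion of the theorem --- remains unproven; your proposal establishes the three upper bounds and the sharpness of part~\eqref{t:weakcoloring:r2} only.
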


\begin{proof}[Proof of Theorem \ref{t:weakcoloring}]
By Proposition~\ref{p:coloringprojection}, it suffices to show $\chi(Z_2) \le 4$, $\chi(Z_3) \le 3$ and $\chi(Z_4) \le 2$.  The first inequality is clear since $Z_2$ is a complete graph on 4 vertices, and the other two follow from the colorings given in Figure~\ref{f:weak3coloring} and~\ref{f:weak2coloring}, respectively (one can manually verify that no monochromatic edges are present). The remaining claims about the sharpness of parts~\eqref{t:weakcoloring:r2} and~\eqref{t:weakcoloring:r3} of the theorem follow from Examples~\ref{e:coloringr2sharp} and~\ref{e:coloringr3sharp} below, respectively.  
\end{proof}

\begin{figure}[t!]
\begin{center}
\begin{subfigure}[t]{0.45\textwidth}
\begin{center}
\includegraphics[width=1.5in]{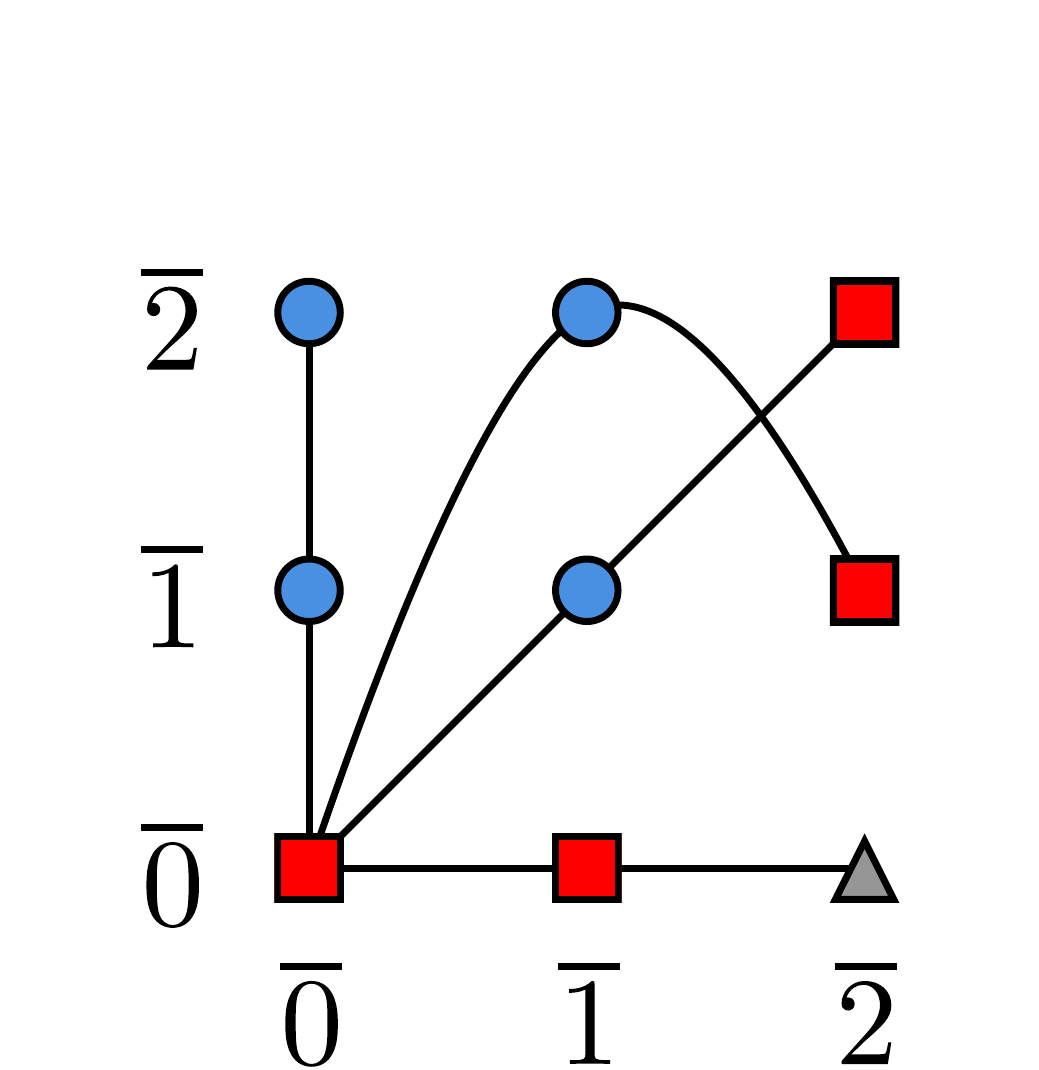}
\end{center}
\caption{}
\label{f:weak3coloring}
\end{subfigure}
\hspace{0.02\textwidth}
\begin{subfigure}[t]{0.45\textwidth}
\begin{center}
\includegraphics[width=1.5in]{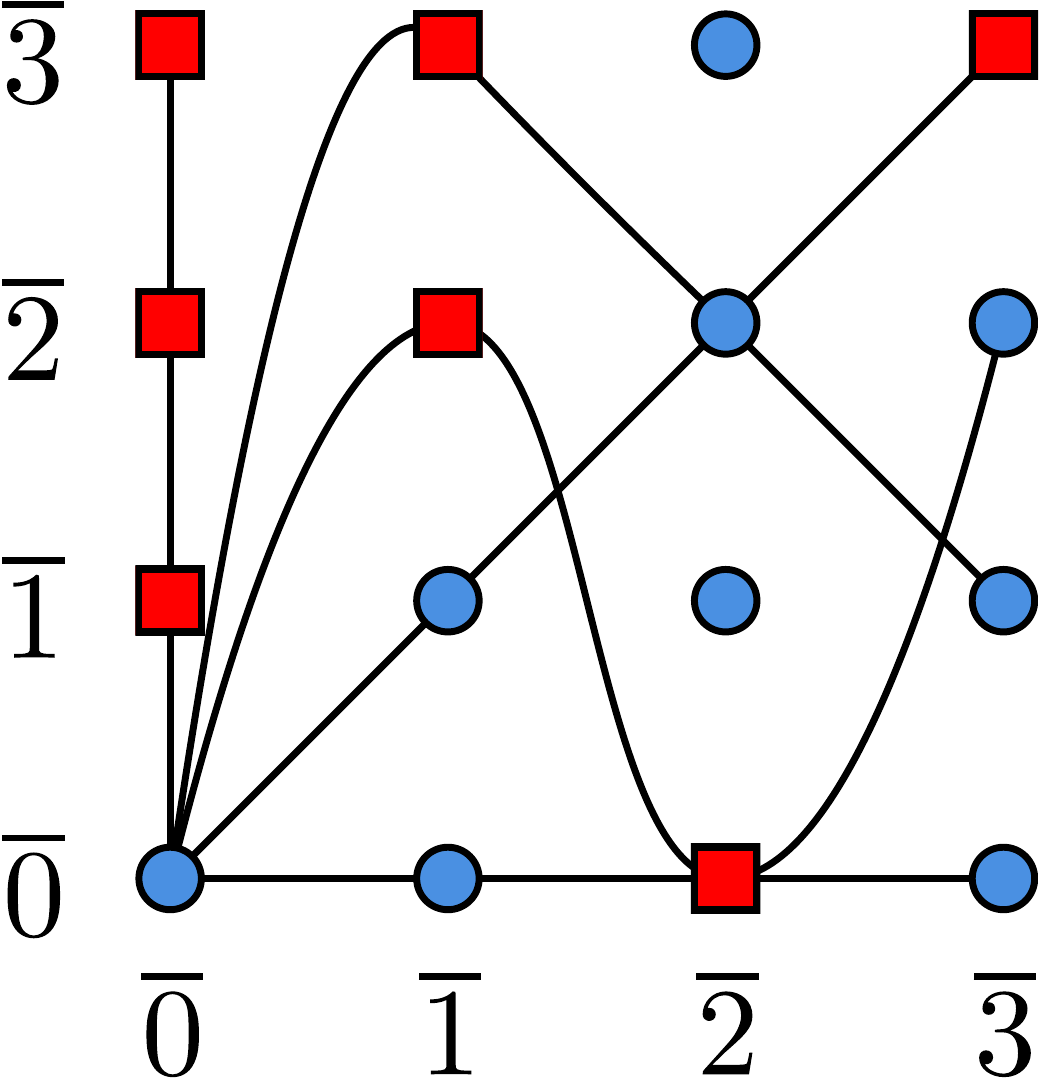}
\end{center}
\caption{}
\label{f:weak2coloring}
\end{subfigure}
\end{center}
\caption{Colorings of $Z_3$ (left) and $Z_4$ (right) for the proof of Theorem~\ref{t:weakcoloring}.  In each, every edge through the point $(\ol 0, \ol 0)$ is also depicted.}
\end{figure}

\begin{example}\label{e:coloringr2sharp}
The $2$-segment hypergraph $H$ with vertex set 
$$V(H) = \{(0,0), (1,1), (1,2), (2,1)\}$$
depicted in Figure~\ref{f:coloringk4} is the complete graph on 4 vertices, and thus has $\chi(H) = 4$.  
\end{example}

\begin{example}\label{e:coloringr3sharp}
The following $3$-segment hypergraph $H$ has $\chi(H) = 3$.  We~begin by constructing the hypergraph $C$ depicted in Figure~\ref{f:coloringcube} with vertex set $\{0,1,2\}^3$.  Start with the 1-skeleton of the cube with vertex set $\{0,2\}^3$.  Next, add edges so that each side of the cube forms a square with 3 horizontal edges, 3 vertical edges, and 2 diagonal edges (see Figure~\ref{f:coloringcubeface}).  Finally, include edges so that the center layer (i.e.\ $z = 1$) matches Figure~\ref{f:coloringcubeface} as well.  This yields 40 edges:\ 8~within each layer parallel to the $xy$-plane, 8~``vertical'' edges parallel to the $z$-axis, and 2~diagonal edges within each vertical side.  One can check via computation that $C$ is not 2-colorable (a~brute-force implementation in Python takes approximately 1 minute).  

Next, let $H$ denote the image of $C$ under the projection map 
$$\begin{array}{rcl}
\ZZ^3 &\longrightarrow& \ZZ^2 \\
(x, y, z) &\longmapsto& (x + 17z, y + 29z).
\end{array}$$
The lower left corners of the squares comprising the bottom ($z = 0$), middle ($z = 1$), and top $(z = 2)$ layers of $C$ are $(0,0)$, $(17,19)$, and $(34,38)$, respectively.  It suffices to check that the image of each ``vertical'' edge in $H'$ still only contains 3 integer points, but the vector $v$ in the direction of any such edge from the bottom layer ($z = 0$) to the middle layer ($z = 1$) either has $x$-coordinate 17 or $y$-coordinate 29, ensuring $\gcd(v_1, v_2) = 1$.  As such, $H$ is a 3-segment hypergraph with chromatic number $3$.  
\end{example}

\begin{figure}[t!]
\begin{center}
\begin{subfigure}[t]{0.23\textwidth}
\begin{center}
\includegraphics[width=\textwidth]{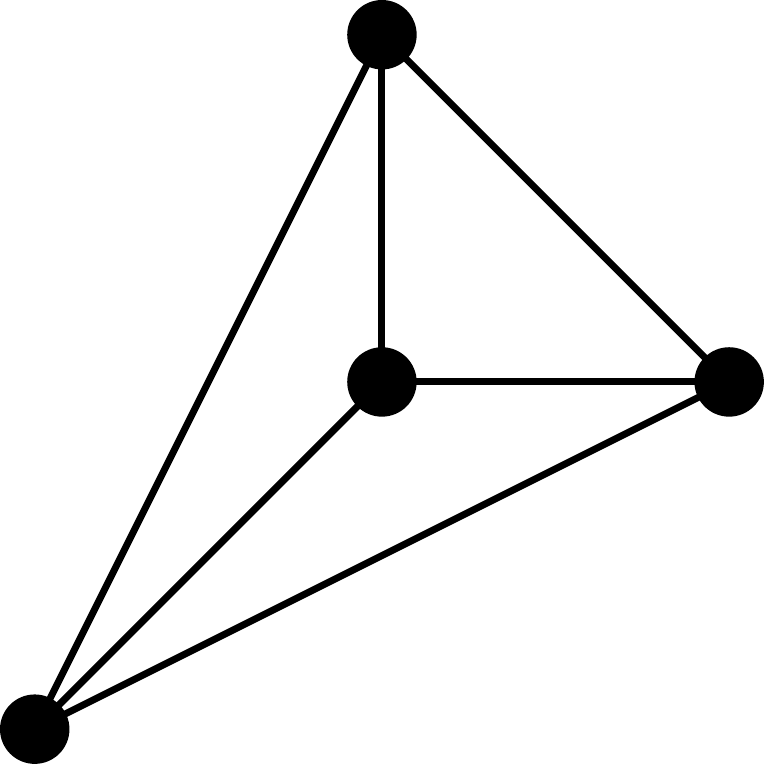}
\end{center}
\caption{}
\label{f:coloringk4}
\end{subfigure}
\hspace{0.04\textwidth}
\begin{subfigure}[t]{0.23\textwidth}
\begin{center}
\includegraphics[width=\textwidth]{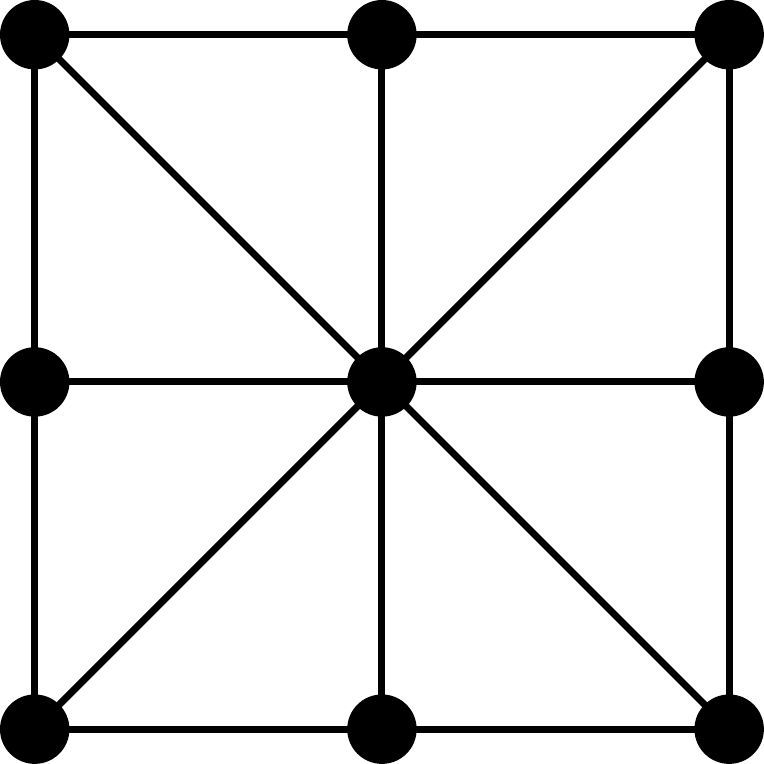}
\end{center}
\caption{}
\label{f:coloringcubeface}
\end{subfigure}
\hspace{0.04\textwidth}
\begin{subfigure}[t]{0.4\textwidth}
\begin{center}
\includegraphics[width=\textwidth]{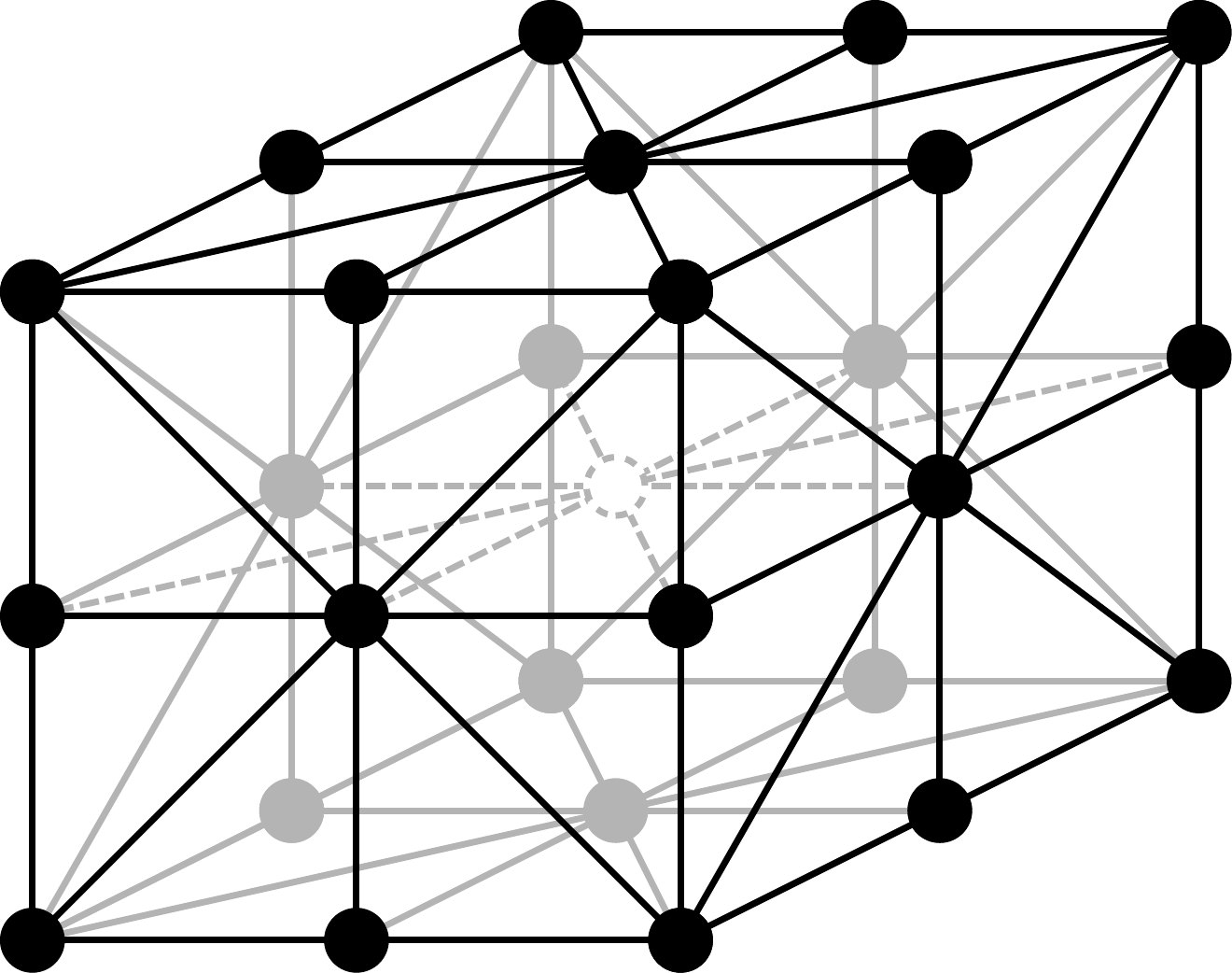}
\end{center}
\caption{}
\label{f:coloringcube}
\end{subfigure}
\end{center}
\caption{Hypergraphs from Examples~\ref{e:coloringr2sharp} and~\ref{e:coloringr3sharp}.}
\end{figure}

One immediate consequence of Theorem~\ref{t:weakcoloring} is a bound on $\tau(H)$ in terms of $V(H)$ for any $r$-segment hypergraph $H$.  

\begin{cor}\label{c:weakcoloring}
Let $H = (V,E)$ be an $r$-segment hypergraph. 
\begin{enumerate}[(a)]
\item If $r = 2$, then $\tau(H) \le \frac{3}{4}|V|$. 
\item If $r = 3$, then $\tau(H) \le \frac{2}{3}|V|$.
\item For every $r \ge 4$, $\tau(H) \le \frac{1}{2}|V|$.
\end{enumerate}  
\end{cor}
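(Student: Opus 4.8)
The plan is to derive Corollary~\ref{c:weakcoloring} as a direct consequence of Theorem~\ref{t:weakcoloring}, using the standard relationship between the chromatic number of a hypergraph and its covering number. The key observation is that in any proper $k$-coloring of $H$, the color classes partition $V$ into $k$ independent-ish parts, and the complement of the largest color class forms a cover. More precisely, I would first recall that if $H$ is $k$-colorable, then there is a partition $V = V_1 \cup \cdots \cup V_k$ into color classes such that no edge is monochromatic, which means that no edge is entirely contained in any single $V_i$. Consequently, for each edge $e$ and each color class $V_i$, the edge $e$ is not contained in $V_i$, so $e$ must meet the complement $V \smallsetminus V_i$.

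The main step is then to choose the color class to discard. Since the $k$ color classes partition $V$, by averaging (pigeonhole) at least one class, say $V_j$, satisfies $|V_j| \ge |V|/k$. I would then take the cover to be $C = V \smallsetminus V_j$. Because no edge lies entirely within $V_j$, every edge $e$ satisfies $e \cap C \ne \nothing$, so $C$ is indeed a cover of $H$. This yields
$$\tau(H) \le |C| = |V| - |V_j| \le |V| - \frac{|V|}{k} = \frac{k-1}{k}|V|.$$
Substituting the bounds from Theorem~\ref{t:weakcoloring}---namely $\chi(H) \le 4$ when $r = 2$, $\chi(H) \le 3$ when $r = 3$, and $\chi(H) \le 2$ when $r \ge 4$---gives the three claimed bounds $\frac{3}{4}|V|$, $\frac{2}{3}|V|$, and $\frac{1}{2}|V|$ respectively. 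Each part follows by taking $k = \chi(H)$ in the displayed inequality.

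I do not anticipate a genuine obstacle here, since this is a routine bootstrapping of the coloring result; the only point requiring a sentence of care is the justification that discarding a largest color class leaves a valid cover, which rests entirely on the defining property that no edge is monochromatic. One minor subtlety worth flagging is that the averaging argument gives $|V_j| \ge |V|/k$ only when the $k$ color classes are genuinely used, but this causes no trouble: if $H$ is colorable with strictly fewer than $k$ colors the same inequality holds a fortiori with the smaller chromatic number, and in any case we may pad with empty classes without affecting the bound $|V| - |V_j|$ in the correct direction. Thus the corollary is immediate from Theorem~\ref{t:weakcoloring}.
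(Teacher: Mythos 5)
Your proposal is correct and takes essentially the same approach as the paper: apply Theorem~\ref{t:weakcoloring}, discard a largest color class (which by pigeonhole has at least $|V|/k$ vertices), and observe that its complement is a cover since no edge is monochromatic, giving $\tau(H) \le \frac{k-1}{k}|V|$. In fact your write-up is slightly more careful than the paper's, which phrases the pigeonhole step as some color labeling ``at most'' $1/k$ of the vertices when ``at least'' is what the argument requires.
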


\begin{proof}
Suppose $H$ is $k$-colorable.  For any color $c$, the set of all vertices not colored by $c$ is a cover. By the pigeon hole principle, some color must label at most $1/k$ of the vertices of $H$, so $\tau(H) \le \frac{k-1}{k}|V|$.  The result now follows from Theorem~\ref{t:weakcoloring}.  
\end{proof}

\begin{remark}\label{r:r2realization}
One surprising consequence of Theorem~\ref{t:weakcoloring} is that some graphs are not realizable as 2-segment hypergraphs.  For example, $K_5$ (the complete graph on 5~vertices) is the smallest complete graph that cannot be realized as a 2-segment hypergraph, and is also the smallest non-planar complete graph.  On the other hand, $K_{3,3}$ (the complete bipartite graph with 3 vertices of each color) is realizable as a 2-segment hypergraph (in fact, any bipartite graph is realizable).  
\end{remark}

Schnyder proved that every planar graph can be embedded so that every vertex is an integer point and every edge is a straight line segment \cite{Walter}.  This, together with Remark~\ref{r:r2realization}, prompts the following.  

\begin{prob}\label{prob:r2realization}
Which graphs are realizable as 2-segment hypergraphs?  
\end{prob}

\section{Covering intersecting $r$-segment hypergraphs}
\label{sec:isolatedvertex}

The main result of this section is Theorem~\ref{t:isolatedvertex}, in which we prove Conjecture~\ref{conj:mainconjecture} for intersecting $r$-segment hypergraphs.  First, we give two examples demonstrating that no hypotheses in Conjecture~\ref{conj:mainconjecture}  can be dropped.  

\begin{example}\label{e:conjecturer2}
Conjecture~\ref{conj:mainconjecture} does not hold for $r = 2$, as exemplified by the complete $2$-segment hypergraph $R$ on the vertex set $V(R) = \{(0,0),(1,0),(0,1)\}$, which has $\nu = 1$ and $\tau = 2$.
\end{example}

\begin{example}\label{e:nonsequential}
Without the ``consecutive'' assumption in Definition~\ref{d:maindefinition}, Conjecture~\ref{conj:mainconjecture} would fail to hold.  Consider, for instance, the hypergraph $S$ with vertex set
$$V(S) = \{(-2,0),(0,0),(2,0),(0,2),(-1,3),(1,3),(0,6)\}$$
depicted in Figure~\ref{f:nonsequential}.  
This configuration resembles the classical non-Fano plane and has $\nu(H) = 1$ but $\tau(H) = 3$.  
\end{example}

In what follows, an \emph{isolated point} in a hypergraph is a vertex lying in only one	edge.  

\begin{thm}\label{t:isolatedvertex}
If $H$ is an intersecting $r$-segment hypergraph with $r \ge 3$, then $H$ contains an isolated vertex. In particular, $\tau(H) \le r - 1$.    
\end{thm}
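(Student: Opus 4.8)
The plan is to first reduce the covering bound to the existence of an isolated vertex, and then establish that existence by a minimal-area descent. For the reduction, suppose $p$ is an isolated vertex, lying only in the edge $e_0$. Because $H$ is intersecting, every edge $f$ meets $e_0$; if $f \ne e_0$ then $p \notin f$, so $f$ must meet $e_0 \setminus \{p\}$, while $e_0$ itself obviously meets $e_0 \setminus \{p\}$ (as $r - 1 \ge 1$). Thus $e_0 \setminus \{p\}$ is a cover of size $r - 1$, giving $\tau(H) \le r - 1$. It therefore suffices to produce an isolated vertex.

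Assume for contradiction that every vertex of $H$ lies in at least two edges. First I would dispose of the concurrent case: if all edges pass through a common lattice point $p$, then any vertex $q \ne p$ lies in a unique edge, since two edges through $q$ would also both contain $p$ and hence share two points, contradicting the fact that distinct edges lie on distinct lines. As $r \ge 2$ such a $q$ exists, so it is isolated, a contradiction. Moreover, if no three edges form a genuine triangle then every triple of edges is concurrent, and a standard argument (two fixed edges meet at a point $P$, and concurrency forces every further edge through $P$) shows the whole family is concurrent. Hence we may assume three edges determine a triangle with three distinct lattice-point corners.

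Now I would run the descent. Among all triples of edges whose pairwise intersections are three distinct vertices, choose one, say $e_1, e_2, e_3$, whose triangle $T = ABC$ has minimum area, where $A = e_2 \cap e_3$, $B = e_1 \cap e_3$, $C = e_1 \cap e_2$. The corners $B, C$ both lie among the $r \ge 3$ consecutive lattice points of $e_1$. Suppose first that some lattice point $D$ of $e_1$ lies strictly between $B$ and $C$. Then $D$ is a vertex, so by hypothesis it lies in a second edge $e_4 \ne e_1$; the segment $e_4$ enters the interior of $T$ at $D$ and exits through one of the other two sides, meeting the corresponding edge ($e_2$ or $e_3$) at a shared vertex $X$ lying on that side. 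The edges $e_1, e_4$, and that edge then bound a triangle (e.g.\ $\triangle DCX$) that is a proper subregion of $T$, hence of strictly smaller area, contradicting minimality. The same contradiction arises if any vertex lies strictly inside $T$, or if any edge crosses $\mathrm{int}(T)$ through two of its sides, so the minimal triangle must be ``clean'': its sides carry no intermediate vertices and nothing crosses its interior.

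The main obstacle is precisely this clean case, where each side of $T$ is a primitive lattice segment (its two corners are consecutive points of the relevant edge) and $\mathrm{int}(T)$ is empty. Here the descent has no interior point to exploit, and I expect one must instead use the extra lattice points guaranteed by $r \ge 3$: each edge $e_i$ carries a vertex lying just beyond a corner of $T$, and each such vertex, having degree $\ge 2$, spawns a further edge. The heart of the proof should then be an ``areas of bounded regions'' estimate, comparing the area of $T$ (at least $\tfrac12$ for a lattice triangle) against the areas of the small regions these outward edges cut off near the corners, to show that they cannot all close up consistently without either producing a smaller triangle or forcing one of the outward vertices to be isolated. Making this accounting precise, and thereby ruling out an empty minimal triangle for every $r \ge 3$, is the crux; the failure at $r = 2$ (the triangle of Example~\ref{e:conjecturer2}) is exactly the degenerate clean triangle with no room for such extra points, which clarifies why $r \ge 3$ is essential.
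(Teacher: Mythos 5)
Your reduction (an isolated vertex $p$ in edge $e_0$ makes $e_0 \setminus \{p\}$ a cover of size $r-1$), your handling of the concurrent case, and your minimal-area descent all match the paper's strategy, and the steps you do carry out are sound: if the minimal triangle $T$ has a lattice point strictly inside a side or in its interior, the intersecting hypothesis forces the second edge through that point to meet one of the bounding edges at a lattice point on the boundary of $T$ (or at the opposite corner), producing a strictly smaller triangle formed by edges of $H$.

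However, there is a genuine gap, and you have located it yourself: the ``clean'' case, where the corners of $T$ are consecutive lattice points on their edges and $\mathrm{int}(T)$ contains no vertices, is exactly where all the content of the theorem lives, and your proposal offers only a hope (``an areas of bounded regions estimate \ldots making this accounting precise \ldots is the crux'') rather than an argument. The paper's proof of this case is not an area-accounting inequality; it is a further case analysis on whether each corner of $T$ is an endpoint of one of $e_1, e_2, e_3$, driven by two ideas you do not have. First, a parallelism observation: if the corner $a = e_1 \cap e_2$ is \emph{not} an endpoint of $e_1$ or $e_2$, then, because the corners are consecutive lattice points, the next points are $p_1 = 2a - c$ and $p_2 = 2a - b$, so $p_1 - p_2 = b - c$ and the line $\ol{p_1 p_2}$ is parallel to $e_3$; this forces the edges through $p_1$ and $p_2$ to be distinct and rules out (again by parallelism) several ways they could meet $T$. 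Second, equal-area comparisons combined with a \emph{secondary} minimality: e.g.\ choosing the edge through $p_2$ that hits $e_1$ at the vertex $q_j$ with $j$ minimal, and then using the edge through $q_{j-1}$, which must cross a triangle of the same area as $T$, to contradict the minimality of $T$. (In the case where every corner is an endpoint, a similar equal-area argument with the points $p_i$ just beyond the corners does the job.) Without an argument of this kind, the empty minimal triangle is not ruled out, and the proof is incomplete; note also that your heuristic ``the outward edges cannot close up consistently'' must somewhere use the intersecting hypothesis and the lattice structure simultaneously, since the statement is false for non-intersecting hypergraphs (the paper's Example~\ref{e:isolatedvertex} exhibits a $3$-segment hypergraph with no isolated vertex at all).
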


\begin{proof}
Assume for contradiction that $H$ contains no isolated vertices.   If all of the edges in $H$ intersect at a vertex, then the theorem is trivial.  Otherwise, there are three edges $e_1, e_2, e_3 \in E(H)$ in $H$ whose corresponding line segments form a triangle $T = \{a,b,c\}$ with $a = e_1 \cap e_2$, $b = e_2 \cap e_3$ and $c = e_3 \cap e_1$.  Suppose further that $T$ has minimal area among all triangles formed by edges in $H$.  The minimality of the area of $T$ implies that $T$ contains no other integer points in its boundary, and since $r \geq 3$, each edge $e_i$ bounding $T$ contains at least one more vertex.  

First, suppose that each of the vertices $a$, $b$, and $c$ is an endpoint of one of the edges $e_1$, $e_2$, and $e_3$, and let $p_i$ denote the next sequential vertex along each edge $e_i$, as depicted in Figure~\ref{f:oneeach}.  Since $H$ contains no isolated vertices, some other edge $e_{p_1}$ must contain $p_1$, and since any two edges intersect, $e_{p_1}$ must intersect both $e_2$ and $e_3$ at $b$. Similarly, some other edge $e_{p_3}$ contains both $p_3$ and $a$, and must also intersect $e_{p_1}$ at some integer point $q$ between $p_1$ and $b$.  Now, observe that $T$ and the triangle $\{a,p_1,b\}$ have the same area, meaning that the triangle $\{a,p_1,q\}$ has smaller area than $T$, which is a contradiction.  

In all the remaining cases, one of the vertices $a$, $b$, and $c$ is not an endpoint of any of the edges $e_1$, $e_2$, and $e_3$. Without loss of generality, assume this vertex is $a$.  Let $p_1$ and $p_2$ denote the vertices adjacent to $a$ on $e_1$ and $e_2$, respectively, as in Figure~\ref{f:parallel}. As before, since $H$ has no isolated vertices, some other edges $e_{p_1}$ and $e_{p_2}$ must contain $p_1$ and $p_2$, respectively.  Since $H$ is intersecting and the line $\ol{p_1p_2}$ is parallel to $e_3$, the edges $e_{p_1}$ and $e_{p_2}$ must be distinct.  Moreover, $e_{p_1}$ and $e_{p_2}$ cannot intersect $b$ and $c$, respectively, as this would make $e_{p_1}$ and $e_{p_2}$ parallel (see Figure~\ref{f:parallel}).  Thus we may assume that $e_{p_2}$ intersects $e_3$ at a point past vertex $b$, as depicted in Figure~\ref{f:minimum}.  

If $e_{p_2}$ intersects $e_3$ at the vertex  adjacent to $b$, then $e_{p_2}$ is parallel to $e_1$.  Therefore, $e_{p_2}$ must intersect $e_1$ at a vertex closer to $p_1$ than to $a$.  Labeling the vertices on $e_1$ as $q_0 = p_1,  q_1, q_2, \ldots$ according to their distance from $a$, assume that $e_{p_2}$ is the edge containing $p_2$ that contains $q_j$ for $j \ge 1$ minimal.  Since $q_{j-1}$ is not isolated, it must be contained in some additional edge $e_{q_{j-1}}$ that, by the minimality of $j$, does not contain $p_2$.  However, the triangle $\{p_2,q_{j-1},q_j\}$ has the same area as $T$, and $e_{q_{j-1}}$ must divide this triangle in order to intersect $e_{p_2}$, contradicting the minimality of $T$.  
\end{proof}

\begin{figure}[t!]
\begin{center}
\begin{subfigure}[t]{0.3\textwidth}
\includegraphics[width=2in]{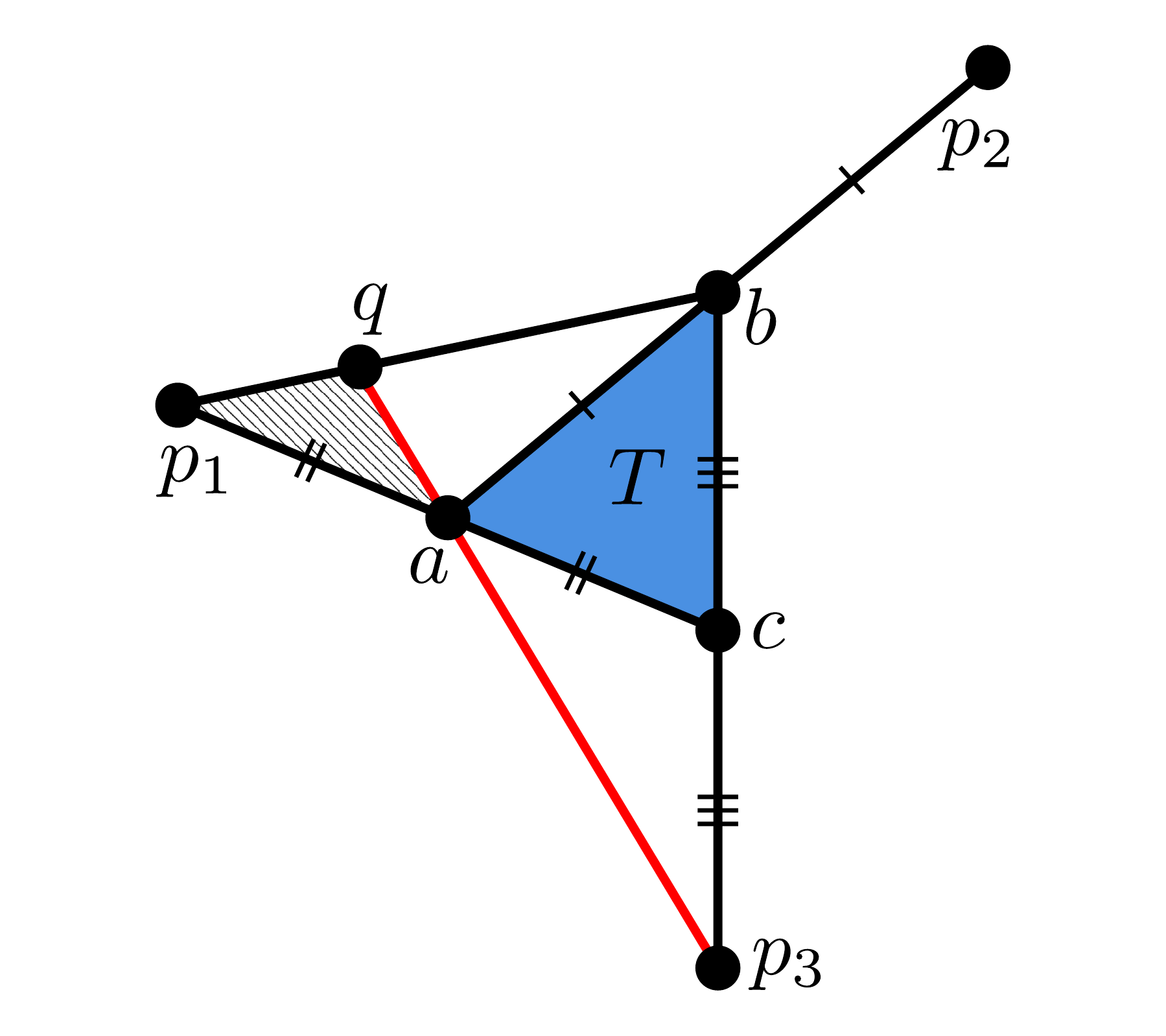}
\caption{}
\label{f:oneeach}
\end{subfigure}
\begin{subfigure}[t]{0.3\textwidth}
\includegraphics[width=2in]{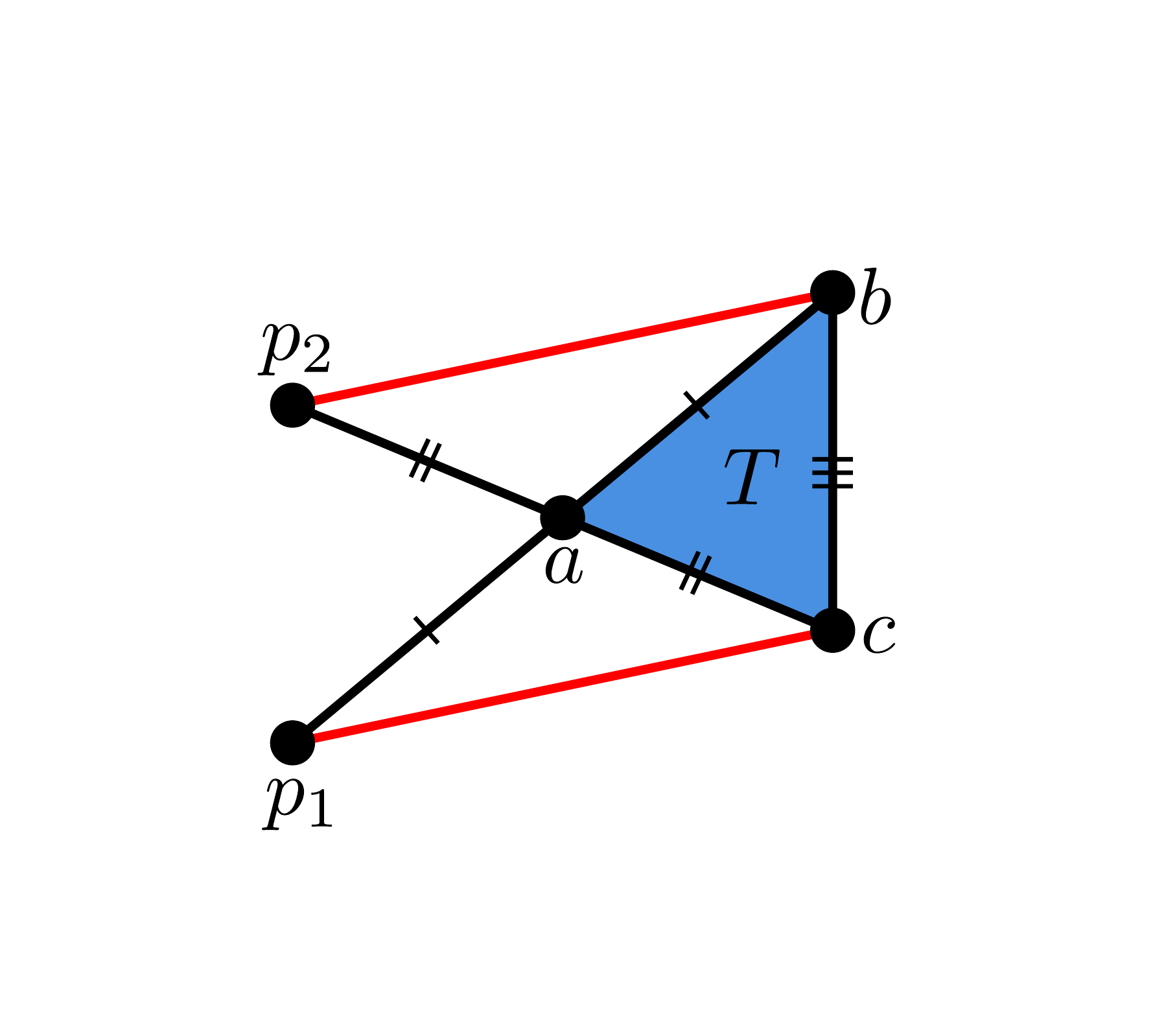}
\caption{}
\label{f:parallel}
\end{subfigure}
\begin{subfigure}[t]{0.3\textwidth}
\includegraphics[width=2in]{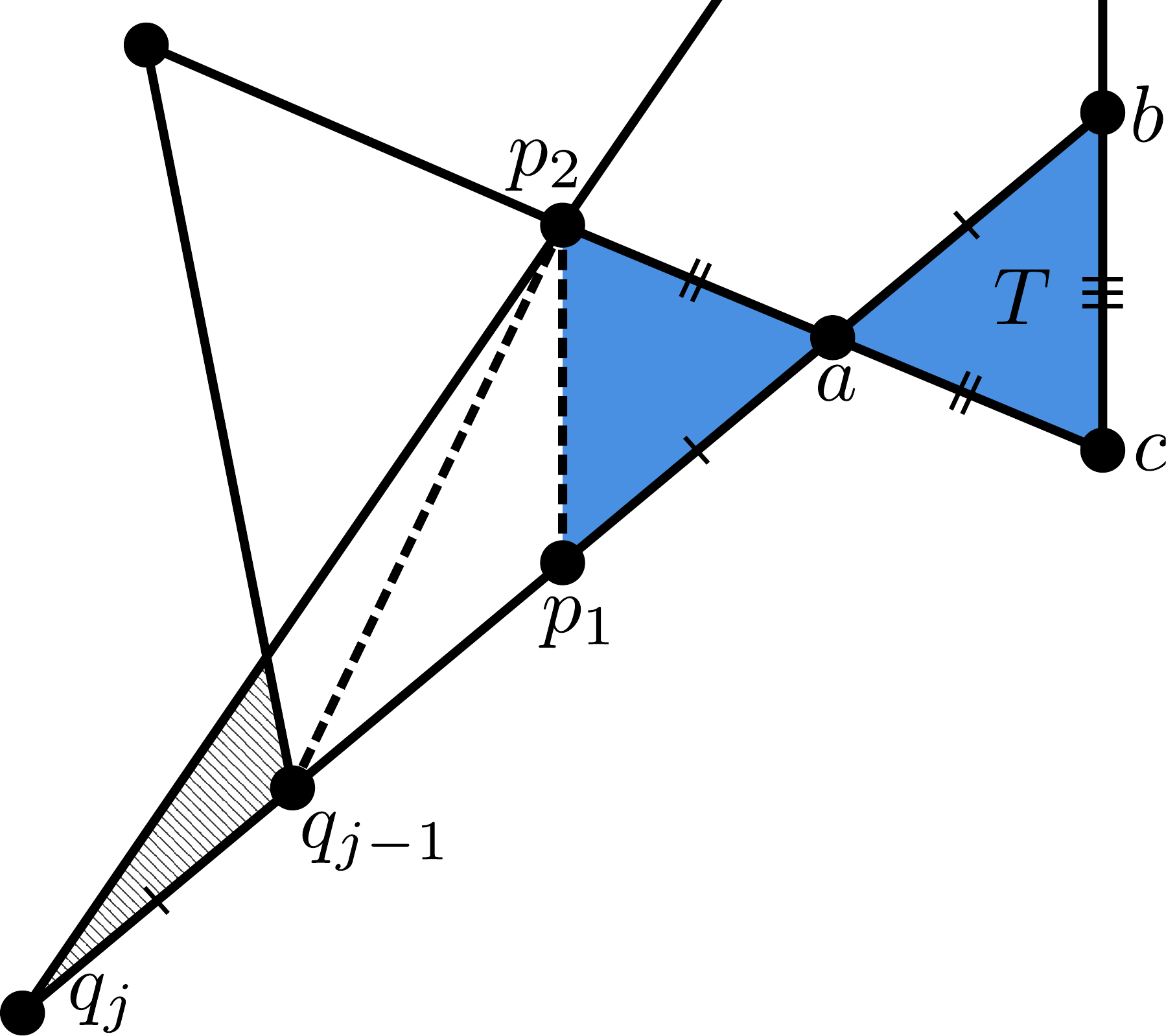}
\caption{}
\label{f:minimum}
\end{subfigure}
\end{center}
\caption{Configurations for the proof of Theorem~\ref{t:isolatedvertex}.}
\end{figure}

\begin{example}\label{e:isolatedvertex}
Theorem~\ref{t:isolatedvertex} need not hold for non-intersecting $r$-segment hypergraphs in general. For example, the 3-hypergraph in Figure~\ref{f:coloringcubeface} has no isolated vertices.  
\end{example}

\section{A refined bound for intersecting $r$-segment hypergraphs}
\label{sec:sharperbounds}

Our main goal in this section is to investigate the following refinement of Conjecture~\ref{conj:mainconjecture} for intersecting $r$-segment hypergraphs, which closely resembles the bound in the generalized Tuza conjecture for certain linear hypergraphs (Remark~\ref{r:intersectingboundtuza}).  

\begin{conj}\label{conj:intersectingbound}
If $H$ is an intersecting $r$-segment hypergraph and $r \ge 5$, then 
$$\tau(H) \le \left\lceil \, \frac{r}{2} \, \right\rceil.$$
\end{conj}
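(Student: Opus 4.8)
The plan is to reduce the problem to a sharp statement about how the edges of $H$ can cross a single fixed edge, after normalizing the underlying geometry as much as possible. If all edges of $H$ share a common vertex then $\tau(H) = 1 \le \lceil r/2 \rceil$ and we are done, so assume otherwise. As in the proof of Theorem~\ref{t:isolatedvertex}, choose three edges $e_1, e_2, e_3$ bounding a triangle $T = \{a,b,c\}$ of minimal area among all triangles formed by edges of $H$. Minimality forces $T$ to be an empty lattice triangle, hence of area $\tfrac12$ by Pick's theorem, and consequently $T$ is unimodularly equivalent to the standard triangle. Since a unimodular change of coordinates preserves $\ZZ^2$, collinearity, and the ``consecutive integer points'' structure of edges, I would apply such a map to assume $a = (0,0)$, $b = (1,0)$, $c = (0,1)$, so that $e_2$ lies on the $x$-axis, $e_1$ on the $y$-axis, and $e_3$ on the line $x + y = 1$.

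In this normalized picture, every edge $f \ne e_2$ is \emph{not} parallel to the $x$-axis (otherwise $f$ and $e_2$ would be disjoint, contradicting that $H$ is intersecting), so $f$ meets $e_2$ in exactly one vertex $(m_f, 0)$. Hence the set $C = \{(m_f,0) : f \in E(H)\}$ is a cover of $H$ lying entirely on $e_2$, and $\tau(H) \le |C|$ equals the number of \emph{distinct} points at which edges of $H$ cross $e_2$. The target therefore becomes the following geometric lemma: in the normalized configuration, the edges of $H$ cross at least one of $e_1, e_2, e_3$ in at most $\lceil r/2\rceil$ distinct vertices; one then takes the cover on that side.

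To prove the lemma I would exploit that every edge $f$ simultaneously meets all three sides of $T$. Writing $f \cap e_1 = (0,n_f)$ and $f \cap e_2 = (m_f,0)$, collinearity with the crossing of $e_3$ pins the third crossing to $(p_f, 1-p_f)$ with $p_f = m_f(n_f-1)/(n_f-m_f)$; since each of $m_f, n_f, p_f$ must be an integer lying in an interval of $r$ consecutive integers containing both $0$ and $1$ (the vertex ranges of $e_2, e_1, e_3$), this is a strong simultaneous integrality constraint. Layering on the pairwise-intersection requirement --- any two edges must meet at a common lattice vertex, which again produces a triangle whose area cannot undercut that of $T$ --- I would argue that the feasible crossing vertices on a suitable side cannot fill more than half of that side's $r$ positions. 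For $r = 5$, the first case genuinely beyond Theorem~\ref{t:isolatedvertex} (for $r = 3$ one has $\lceil r/2\rceil = 2 = r-1$, while $r = 4$ is excluded from Conjecture~\ref{conj:intersectingbound}), this reduces to a finite check of the admissible integer triples $(m_f, n_f, p_f)$ together with the area-minimality of $T$, which I expect to resolve the case cleanly.

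The main obstacle is the geometric lemma itself for general $r$: while the normalization to the standard empty triangle and the collinearity and integrality constraints are robust, controlling the interaction of the pairwise-intersection condition across all edges --- and thereby ruling out configurations hitting more than $\lceil r/2\rceil$ vertices on each of the three sides simultaneously --- becomes a rapidly branching case analysis as $r$ grows. I expect that bounding the number of crossings on a single fixed side by $\lceil r/2\rceil$ is the crux, and that a clean, $r$-uniform argument, rather than a finite verification, is exactly what is missing beyond $r = 5$.
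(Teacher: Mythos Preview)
This statement is a \emph{conjecture} in the paper; the authors do not prove it for general $r$, and your proposal does not either. Two concrete issues stand out.

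First, your normalization step is incorrect. You assert that the minimal-area triangle $T$ among those formed by edges of $H$ must be an empty lattice triangle (hence area $\tfrac12$ and unimodularly equivalent to the standard triangle). This is false. In the proof of Theorem~\ref{t:isolatedvertex} the corresponding claim is made \emph{under the standing hypothesis that $H$ has no isolated vertices}: a lattice point $d$ interior to a side of $T$ would then lie on a second edge $e_d$, and that edge would carve out a smaller triangle. Without that hypothesis, $d$ may be isolated and no contradiction arises. Indeed, take the sharpness construction of Theorem~\ref{t:intersectinglowerbound} for $r=5$: checking all $\binom{5}{3}$ triples of edges shows the minimal triangle has area $2$, with an isolated lattice point on its boundary and another in its interior. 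So you cannot reduce to the standard simplex, and your formula $p_f = m_f(n_f-1)/(n_f-m_f)$, which depends on $e_3$ lying on $x+y=1$, does not apply.

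Second, even granting a workable normalization, your ``geometric lemma'' --- that on at least one of $e_1,e_2,e_3$ the edges of $H$ use at most $\lceil r/2\rceil$ distinct crossing vertices --- is the entire content of the conjecture, and you give no argument for it beyond expressing hope that the integrality constraints plus minimality of $T$ will do the work. You acknowledge this yourself for general $r$; but even your proposed finite check for $r=5$ rests on the faulty normalization above, so it does not go through as written.

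For comparison, the paper's verification of the case $r=5$ (Theorem~\ref{t:r5intersecting}) proceeds quite differently: rather than a minimal triangle, it selects four edges in general position forming a quadrilateral flanked by two triangles, and uses a projective-style relation among side-length ratios (Lemma~\ref{l:bratios}) to pin down the combinatorics to four cases, each of which is shown to admit at most two further edges. That machinery sidesteps the need for a unimodular normalization entirely.
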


We prove two results in support of Conjecture~\ref{conj:intersectingbound}:\ (i) we show that the bound in Conjecture~\ref{conj:intersectingbound}, if true, is sharp (Theorem~\ref{t:intersectinglowerbound}), and (ii) we prove that Conjecture~\ref{conj:intersectingbound} holds in the case $r = 5$ (Theorem~\ref{t:r5intersecting}), confirming a strict departure from the bound in Conjecture~\ref{conj:mainconjecture}.  Note that Conjecture~\ref{conj:intersectingbound} fails to hold for $r < 5$, as Conjecture~\ref{conj:mainconjecture} is tight in this case.  Indeed, adding an isolated vertex to every edge in the 2-segment hypergraph $R$ in Example~\ref{e:conjecturer2} yields a $3$-segment hypergraph with $\tau = 2$, and Example~\ref{e:r4intersecting} demonstrates sharpness for $r = 4$.

\begin{remark}\label{r:intersectingboundtuza}
The upper bound in Conjecture~\ref{conj:intersectingbound} almost matches the bound conjectured by Aharoni and the third author \cite{AZ} for a family of hypergraphs $H^{(r-1)}$ (see \cite{AZ} for formal definitions) generalizing a famous conjecture of Tuza \cite{tuza}.  We believe this is no coincidence, since both hypergraphs share a similar ``strong-linearity" property:\ given any two vertices $u, v$ in $H^{(r-1)}$, there is at most one possible $r$-tuple of elements of $V(H^{(r-1)})$ containing both $u$ and $v$ that can serve as an edge; similarly, in an $r$-segment hypergraph $H$, any edge $e$ is fully determined by the pair of vertices $u, v$ at the two ends of the segment corresponding to $e$, in that there is at most one possible $r$-tuple in $\mathbb{Z}^2$ containing both $u$ and $v$ that can serve as an edge in any $r$-segment hypergraph.  
\end{remark}

\begin{figure}[t!]
\begin{center}
\begin{subfigure}[t]{0.3\textwidth}
\begin{center}
\includegraphics[height=1.5in]{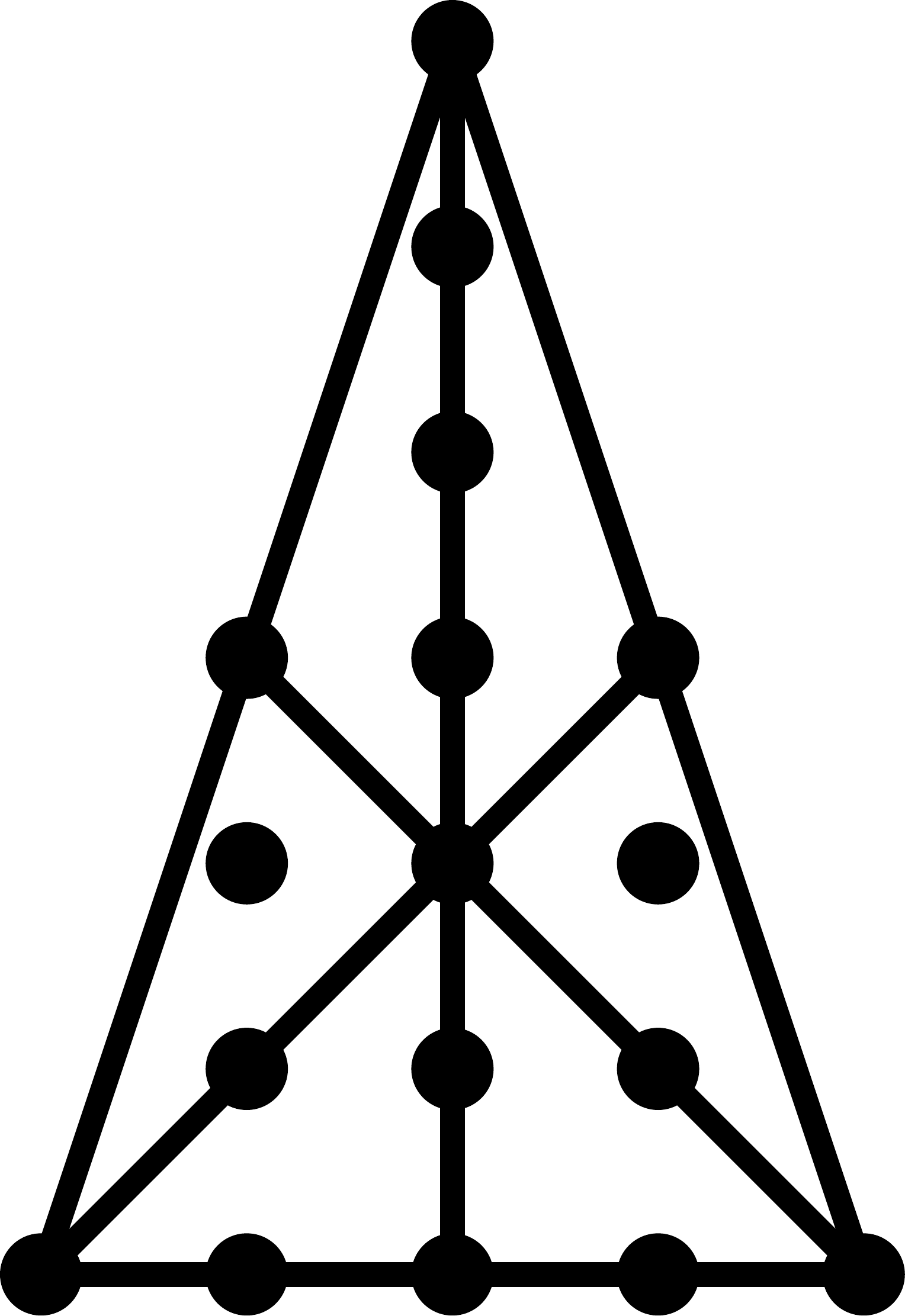}
\end{center}
\caption{}
\label{f:r4intersecting}
\end{subfigure}
\begin{subfigure}[t]{0.3\textwidth}
\begin{center}
\includegraphics[height=1.5in]{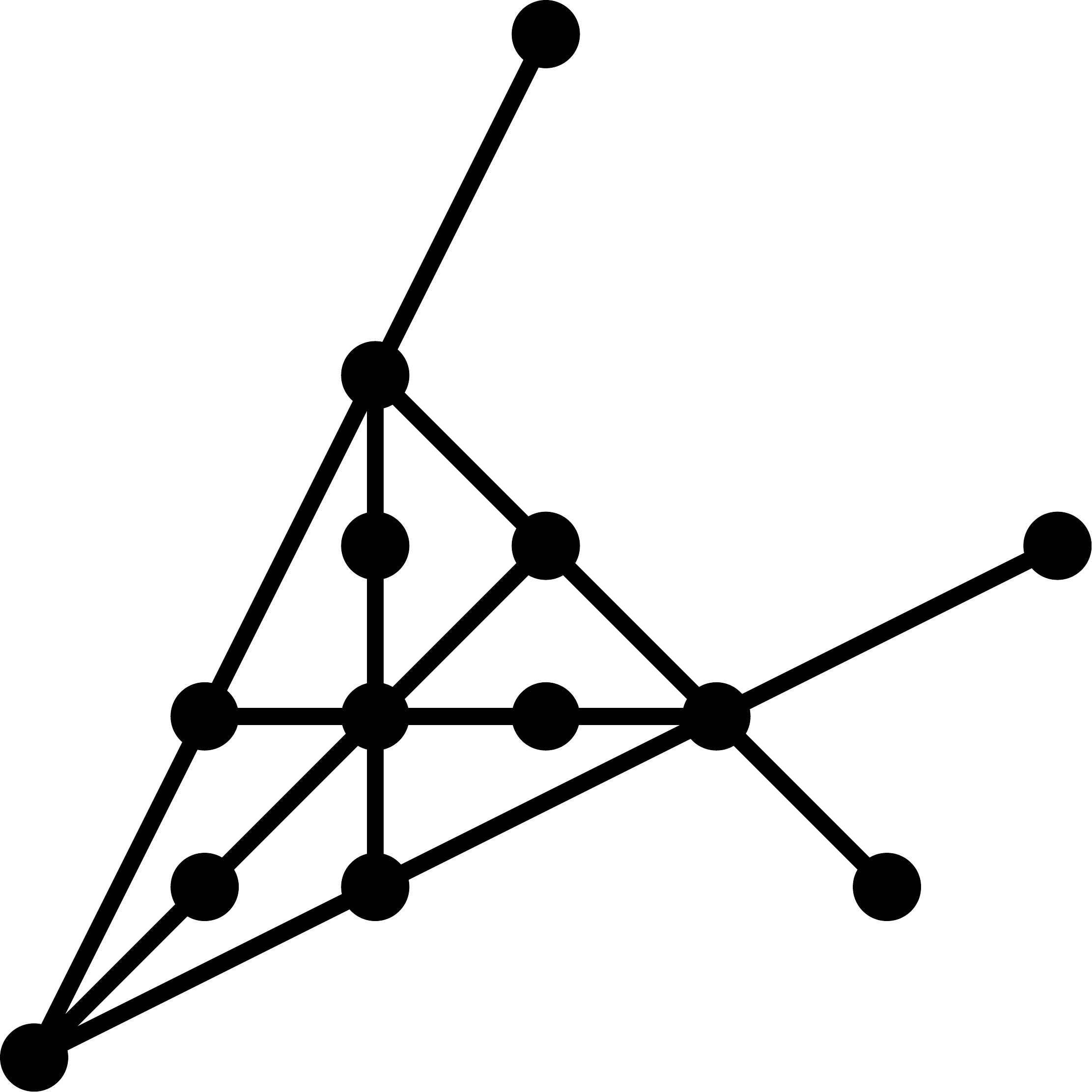}
\end{center}
\caption{}
\label{f:nonsequential}
\end{subfigure}
\begin{subfigure}[t]{0.3\textwidth}
\begin{center}
\includegraphics[width=1.5in]{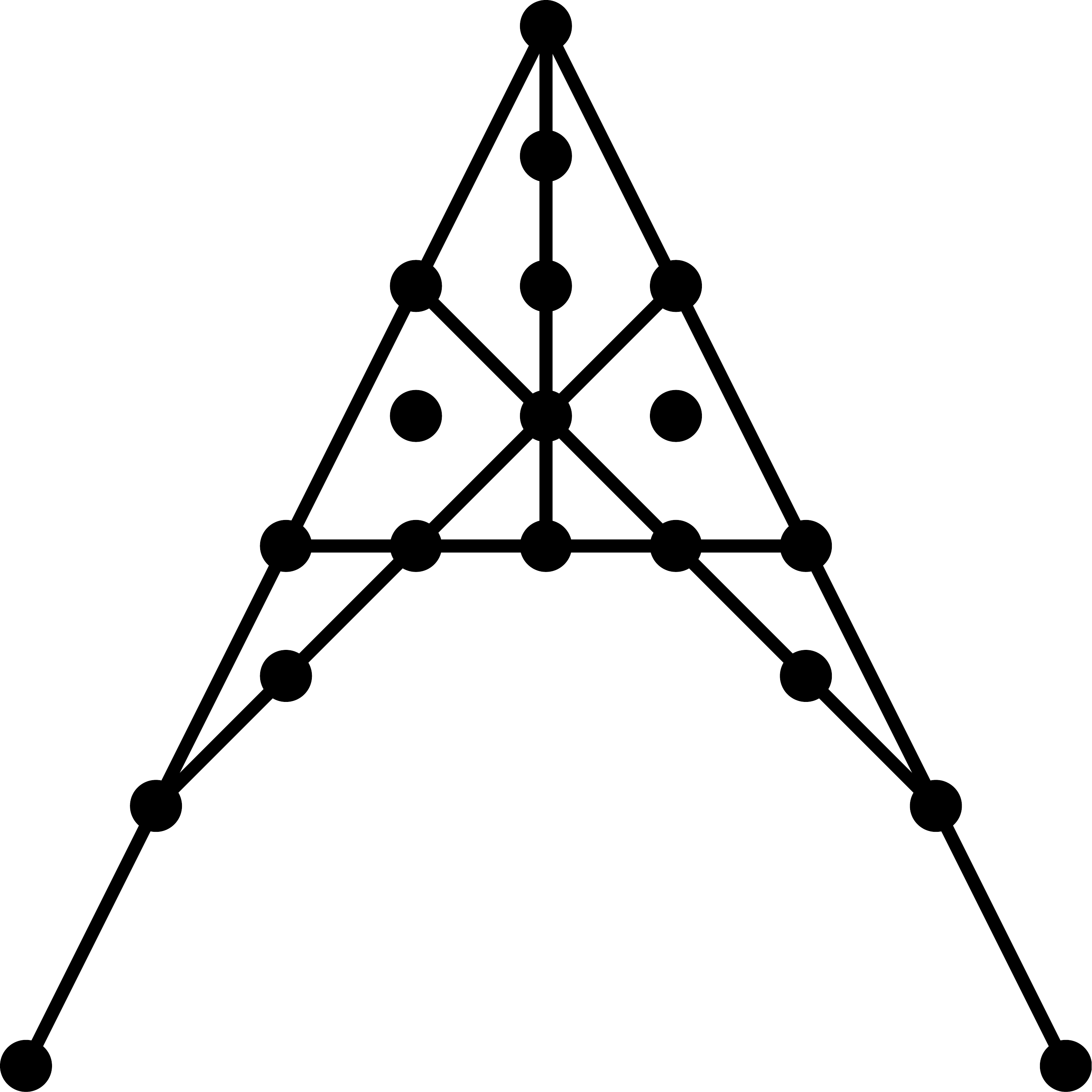}
\end{center}
\caption{}
\label{f:r5intersectingmonsters}
\end{subfigure}
\end{center}
\caption{The configuration lacking the ``sequential'' property from Example~\ref{e:nonsequential} (left), the intersecting $4$-segment hypergraph with $\tau = 3$ from Example~\ref{e:r4intersecting} (middle), and an intersecting 5-segment hypergraph with 6 edges (right).}
\end{figure}

\begin{example}\label{e:r4intersecting}
The hypergraph $H$ depicted in Figure~\ref{f:r4intersecting} is an intersecting $4$-segment hypergraph with covering number $3$. To see this, note that there are $6$ edges in $H$, and any two vertices of degree $3$ share an edge. As such, any two vertices in $H$ cover at most $5$ edges. 

Note that this hypergraph is the $3$-uniform projective plane minus one edge, with an added isolated vertex in every edge.  This construction appears to be special to $r = 4$, in part because of the increased difficulty of obtaining straight-line embeddings of the $r$-uniform projective plane for large $r$.  The authors were unable to obtain a proof, however, as the proof method used in Theorem~\ref{t:r5intersecting} includes too many cases to reasonably check by hand when $r > 5$.  
\end{example}

\begin{thm}\label{t:intersectinglowerbound}
For every integer  $r \ge 5$, there exists an intersecting $r$-segment hypergraph with covering number $\lceil\frac{r}{2}\rceil$.  
\end{thm}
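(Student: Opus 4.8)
The plan is to construct, for each $r \ge 5$, an explicit intersecting $r$-segment hypergraph achieving $\tau(H) = \lceil r/2 \rceil$. The strategy has two parts: first exhibit a configuration where every pair of edges shares a vertex (so $\nu = 1$), and second show that no set of fewer than $\lceil r/2 \rceil$ vertices can meet all edges. Given the geometric nature of the family and the role of the triangle-area argument in Theorem~\ref{t:isolatedvertex}, I would look for a highly symmetric arrangement of segments — one natural candidate being a ``pencil-like'' or ``near-pencil'' family in which the $r$ vertices of each edge are arranged so that many edges pairwise cross at lattice points, forcing any small cover to leave some edge unpierced.

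\medskip

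First I would fix a target configuration. The quantity $\lceil r/2 \rceil$ strongly suggests a covering-design flavor: each chosen vertex should be able to ``kill'' at most two edges, so that covering $m$ edges requires at least $\lceil m/2 \rceil$ points, and one then needs at least $r$ edges arranged so that no vertex lies on three of them except where geometry forbids a smaller cover. Concretely, I would try to build the hypergraph so that its edges come in a cyclic or triangular pattern (reminiscent of the triangle $T = \{a,b,c\}$ in the proof of Theorem~\ref{t:isolatedvertex}, and of the $r=4$ projective-plane-minus-an-edge example with an added isolated vertex). A promising template is to take a base intersecting configuration of three or more long segments meeting pairwise, then extend each segment to length $r$ by appending extra collinear lattice points, and add further edges through those new points to prevent cheap covers. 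The lower bound $\lceil r/2 \rceil$ should emerge because each cover vertex can simultaneously lie on at most two of the ``essential'' segments.

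\medskip

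The key steps, in order, would be: (1) write down explicit integer coordinates for the vertices and a vector for each edge, verifying condition~(i) of Definition~\ref{d:maindefinition} by checking each edge is $r$ consecutive lattice points (i.e.\ the primitive direction vector has coprime coordinates, exactly the $\gcd(v_1,v_2)=1$ condition used in Proposition~\ref{p:coloringprojection}); (2) verify condition~(ii), that distinct edges lie on distinct lines; (3) check that the hypergraph is intersecting, i.e.\ every two edges share a vertex, which forces $\nu(H) = 1$; (4) exhibit an explicit cover of size $\lceil r/2 \rceil$ to establish the upper bound $\tau(H) \le \lceil r/2 \rceil$; and (5) prove the matching lower bound $\tau(H) \ge \lceil r/2 \rceil$ by a counting argument — bounding the number of edges any single vertex can meet (here the linearity and the ``each edge determined by its two endpoints'' property from Remark~\ref{r:intersectingboundtuza} are crucial) and comparing against the total number of edges.

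\medskip

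The hard part will be step~(5): producing a configuration whose covering number is genuinely as large as $\lceil r/2 \rceil$ rather than some smaller constant, while simultaneously keeping it intersecting. Intersecting families tend to have small covers (indeed Theorem~\ref{t:isolatedvertex} shows $\tau \le r-1$ and forces an isolated vertex), so the challenge is to arrange enough edges, with vertices of controlled degree, that every vertex meets at most two edges among a set of $r$ ``independent'' edges yet the whole family remains pairwise-intersecting. I expect this requires a delicate placement — likely the explicit family illustrated in Figure~\ref{f:r5intersectingmonsters} generalized in a parameterized way — together with a careful degree/incidence count showing that $\lceil r/2 \rceil - 1$ vertices necessarily leave an edge uncovered. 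The coprimality bookkeeping for the chosen direction vectors (ensuring each appended point preserves the ``$r$ consecutive points'' requirement) will be routine but must be handled for every $r$ uniformly, probably by choosing direction vectors of the form $(1, k)$ or with a fixed large coprime offset as in Example~\ref{e:coloringr3sharp}.
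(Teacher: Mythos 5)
Your proposal correctly identifies the right strategy --- and it is, in outline, exactly the paper's strategy: build $r$ pairwise-intersecting lattice segments such that no three are concurrent, so that each vertex covers at most two edges (giving $\tau \ge \lceil r/2 \rceil$) while pairing up edges at their intersection points gives a cover of size $\lceil r/2 \rceil$. But the proposal stops exactly where the theorem begins: you never produce the configuration. Every step you list --- choose coordinates, check consecutiveness, check pairwise intersection at lattice points, check no three concurrent --- is deferred to a construction you describe only as ``a delicate placement,'' ``likely Figure~\ref{f:r5intersectingmonsters} generalized in a parameterized way,'' and you explicitly flag step~(5) as the hard part you have not done. Since the entire content of this existence theorem is the explicit construction and its verification, what you have is a plan, not a proof. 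The difficulty is real: the segments must simultaneously be intersecting, have all pairwise intersections on lattice points, have each edge consist of exactly $r$ consecutive lattice points, and avoid any triple point --- and these constraints must be met uniformly in $r$.

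For comparison, the paper resolves this with a ``fan'' rather than the cyclic/triangular pattern you anticipated: take $\vec v = (1, r-1)$ and $\vec u = (-1, r-1)$, let $e$ run from $(0,0)$ to $(r-1)\vec v$ and $f$ from $(0,0)$ to $(r-1)\vec u$, and for $i = 1, \ldots, r-2$ add a crossbar $e_i$ from $i\vec v$ to $(r-1-i)\vec u$. The choice of direction vectors makes every crossbar slope an integer, namely $2i - r + 1$, so each $e_i$ automatically has a lattice point at each of its $r$ integral $x$-coordinates (this is how the coprimality bookkeeping you worried about is handled uniformly in $r$); a short computation shows $e_i$ and $e_j$ meet at the integer abscissa $x = i + j + 1 - r$, and these intersection abscissas are distinct on each edge, so no three edges are concurrent. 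That yields $r$ edges with $\tau(H) = \lceil \frac{1}{2}|E(H)| \rceil = \lceil r/2 \rceil$, completing precisely the steps your outline leaves open.
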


\begin{proof}
Consider the vectors
$$\vec v = (1, r - 1) \text{ and } \vec u = (-1, r - 1).$$
Construct an $r$-segment hypergraph $H$ by starting with a segment $e$ from $(0, 0)$ to $(r - 1) \vec v$ and a segment $f$ from $(0, 0)$ to $(r - 1) \vec u$, and for each $i = 1, \ldots, r - 2$, include a segment $e_i$ from $i \vec v$ to $(r - 1 - i) \vec u$ (connecting $e$ to $f$).  We must show the following.  
\begin{enumerate}[(i)]
\item 
Each segment has exactly $r$ integer points.  Indeed, $e$ and $f$ each have exactly $r$ integer points (one at each integral $x$-coordinate), and each edge $e_i$ has slope 
$$m_i = \frac{i(r - 1) - (r - 1 - i)(r - 1)}{i - (r - 1 - i)(-1)} = 2i - r + 1 \in \ZZ,$$
meaning each of the $r$ points on $e_i$ with integral $x$-coordinate is an integer point.  

\item 
Any two segments intersect at an integer point.  Indeed, segments $e$ and $f$ intersect at the origin, and the endpoints of each edge $e_i$ intersect $e$ and $f$, respectively.  Moreover, two distinct edges $e_i$ and $e_j$ intersect at 
$$x = i + j + 1 - r,$$
since the corresponding $y$-coordinate is
$$\begin{array}{rcl}
m_i(x - i) + i(r - 1)
&=& (2i - r + 1)(i + j + 1 - r - i) + i(r - 1) \\
&=& 2ij - (r - 1)(i + j) + (r-1)^2 \\
&=& (2j - r + 1)(i + j + 1 - r - j) + j(r - 1) \\
&=& m_j(x - j) + j(r - 1).
\end{array}$$

\item 
No 3 segments intersect at the same integer point.  This holds since, on a given edge, each intersection point identified in part~(ii) has a distinct $x$-coordinate.  
\end{enumerate}
From these facts, we conclude $\tau(H) = \big\lceil \frac{1}{2}|E(H)| \big\rceil = \big\lceil \frac{r}{2} \big\rceil$.  
\end{proof}

For the proof of Theorem~\ref{t:r5intersecting}, we first prove Lemma~\ref{l:bratios}. This geometric result were also used by the authors to systematically verify Conjecture~\ref{conj:intersectingbound} for fixed values of $r \ge 6$ in only finitely many cases.  

\begin{figure}
\begin{center}
\includegraphics[width=5in]{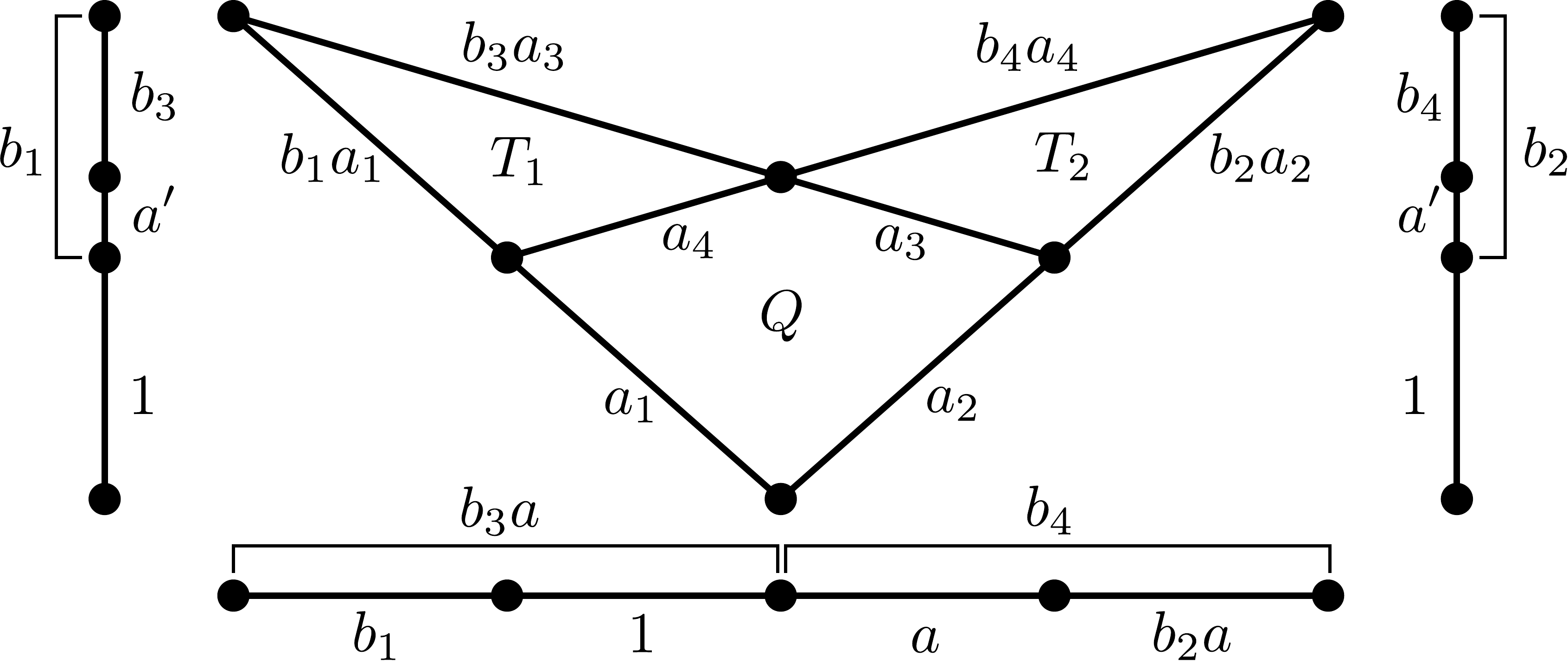}
\end{center}
\caption{The configuration from Lemma~\ref{l:bratios}, together with each projection used in the proof.}
\label{f:bratios}
\end{figure}

\begin{lemma}\label{l:bratios}
Fix a configuration of four pairwise intersecting line segments in $\RR^2$ in which no three intersect at the same point. These segments necessarily bound a quadrilateral $Q$ and two triangles $T_1$ and $T_2$, as in Figure~\ref{f:bratios}.  Denote the lengths of the sides of the quadrilateral $Q$ by $a_1,\ldots, a_4$, and the lengths of the remaining sides of $T_1$ and $T_2$ by $b_1a_1, \ldots, b_4a_4$, as shown in Figure~\ref{f:bratios} (here, we assume only that each labeled length is positive).  The ratios $b_1, \ldots, b_4$ must satisfy the relations
$$b_2b_3 = b_1(b_2 + 1) \qquad \text{and} \qquad b_1b_4 = (b_1 + 1)b_2.$$
In particular, specifying rational values for any two of the $b_i$'s uniquely determines rational values for the other two.  
\end{lemma}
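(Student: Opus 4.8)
The plan is to extract the two identities from the arrangement of the four lines carrying the segments, by applying Menelaus's theorem twice; this is exactly the computation encoded by the projections drawn in Figure~\ref{f:bratios}.

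First I would fix notation for the arrangement. By hypothesis the four segments meet pairwise with no three meetings coinciding, so their four supporting lines are in general position, and (as the statement already grants) their arrangement has precisely three bounded cells: the quadrilateral $Q$ and the triangles $T_1,T_2$. Writing $Q = ABCD$ with sides $AB, BC, CD, DA$ carrying the lengths $a_1,\dots,a_4$, the six intersection points are the four corners $A,B,C,D$ together with the two points $E=\overline{AB}\cap\overline{CD}$ and $F=\overline{BC}\cap\overline{DA}$ where opposite side-lines meet; these are the apexes of $T_1$ and $T_2$. Each $b_i$ is then the ratio of an extension (a segment running from a corner of $Q$ out to $E$ or $F$) to the side of $Q$ it continues, e.g. $|BE| = b\,|AB|$ for the appropriate index. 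I would read the precise index-to-segment dictionary off Figure~\ref{f:bratios}.

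The engine is Menelaus applied to two of the four triangles cut out by triples of the lines, each time using the omitted line as transversal. For the first identity, the three lines $\overline{AB},\overline{BC},\overline{DA}$ bound a triangle whose sides are crossed by the fourth line $\overline{CD}$ at the three points $E$, $C$, $D$; Menelaus equates to $1$ the product of the ratios in which $E,C,D$ divide those sides. Converting each ratio to the $b_i$ — and using that a fully extended segment is ``side plus extension,'' e.g. $|AE| = |AB|(1+b_2)$, which is exactly the source of the factors $(b_i+1)$ — this collapses to $b_2 b_3 = b_1(b_2+1)$. Running the same argument on the triangle bounded by $\overline{AB},\overline{CD},\overline{DA}$, with transversal $\overline{BC}$ meeting its sides at $B$, $C$, $F$, gives the companion identity $b_1 b_4 = (b_1+1)b_2$.

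The closing sentence is then pure algebra: each displayed relation is multilinear, of degree at most one in every $b_i$ occurring, so prescribing any two of $b_1,\dots,b_4$ reduces the pair of relations to equations that solve rationally for the other two, as one checks for each of the $\binom{4}{2}$ choices (for instance, from $b_3,b_4$ one eliminates to get $b_2 = (b_3 b_4 - 1)/(b_3+1)$ and then $b_1 = b_2 b_3/(b_2+1)$). The step demanding the most care is the signed bookkeeping in Menelaus: using the convexity of $Q$ and the fact that $E,F$ lie outside $Q$, I must determine which transversal points fall between two vertices and which fall on an extension, so that the unsigned products really equal $1$ and combine into the stated form. A related subtlety is selecting the correct two of the four available triangles — taking the two triangles that are themselves the bounded cells $T_1,T_2$ yields valid but messier relations carrying an extra additive constant, whereas the two triangles used above are precisely the ones producing the clean bilinear identities. (Alternatively, one can avoid signs entirely by placing $B$ at the origin, writing the four lines in affine coordinates, and computing $b_1,\dots,b_4$ as explicit rational functions of two parameters; both relations and the rational-determination claim then drop out by elimination.)
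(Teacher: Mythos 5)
Your proposal is correct, but it follows a genuinely different route from the paper, which never invokes Menelaus. The paper's proof projects the whole configuration parallel to each of the two diagonals of $Q$ and compares projected lengths, obtaining $(b_1+1)(b_2+1)=b_3b_4$ and $b_1(b_4+1)=b_2(b_3+1)$ (its equations~(\ref{eq1:ratios}) and~(\ref{eq2:ratios})); these are \emph{not} the displayed identities (nor any of your Menelaus products), so the paper must then eliminate $b_4$, solve the resulting quadratic in $b_3$, and discard the negative root to land on $b_2b_3=b_1(b_2+1)$ and $b_1b_4=(b_1+1)b_2$. What your route buys is directness: Menelaus on a triangle cut out by three of the lines, with the omitted line as transversal, yields a relation that is linear in each $b_i$ separately, so two such applications produce the stated pair (or a pair rationally equivalent to it) with no quadratic and no spurious root to reject; and your sign bookkeeping does work out, since each transversal meets the sides of the relevant triangle either in two interior points or in none, so every unsigned Menelaus product equals $1$. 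One caveat: your parenthetical claim about \emph{which} two triangles give the ``clean'' bilinear identities is labeling-dependent and cannot be pinned down without the figure --- the clean relations come from the two triangles that contain $Q$ (i.e., $Q\cup T_1$ and $Q\cup T_2$), while the cells $T_1,T_2$ themselves give the variants carrying extra $(1+b_i)$ factors, and your chosen pair of line-triples may be of either kind depending on which sides of $Q$ the two outer intersection points fall. This is cosmetic rather than a gap: since all four Menelaus relations are multilinear in the $b_i$, any two of them imply the other two by rational elimination (exactly the algebra in your closing paragraph), so the argument is complete whichever two triangles the figure's labels hand you.
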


\begin{proof}
Let $A_1, \dots, A_4$, $B_1,\dots, B_4$ denote the given segments in the lemma of lengths $a_1, \dots, a_4$, $a_1b_1,\dots, a_4b_4$, respectively.  Note the ratios $b_1, \ldots, b_4$ are preserved under  rotation and scaling of the entire configuration, as well as under projection onto a line.  

First, consider the projection $P$ along the diagonal of $Q$ connecting the points $A_1 \cap A_2$ and $A_3 \cap A_4$ (the ``vertical'' diagonal in Figure~\ref{f:bratios}).  Scale the original configuration so that $P(A_1)$ (which coincides with $P(A_4)$) has unit length, and let $a$ be the length of $P(A_2) = P(A_3)$.  Comparing projected lengths yields
$$b_3a = b_1 + 1 \qquad \text{and} \qquad (b_2 + 1)a = b_4$$
which can be solved to obtain 
\begin{equation}\label{eq1:ratios}
(b_1 + 1)(b_2 + 1) = b_3b_4.  
\end{equation}
\indent
Next, consider the projection $P'$ along the diagonal of $Q$ connecting the points $A_1 \cap A_4$ and  $A_2 \cap A_3$ (the ``horizontal'' diagonal in Figure~\ref{f:bratios}). Scaling the original configuration so that $P'(A_1) = P'(A_2)$ has unit length and writing $a'$ for the length of $P'(A_3) = P'(A_4)$, we obtain 
$$(b_3 + 1)a'  = b_1 \qquad \text{and} \qquad (b_4 + 1)a' = b_2,$$
yielding 
\begin{equation}\label{eq2:ratios}
b_1(b_4 + 1) = b_2(b_3 + 1).  
\end{equation}

Solving (\ref{eq1:ratios}) for $b_4$ and substituting into (\ref{eq2:ratios}) yields 
$$b_2b_3^2 + (b_2 - b_1)b_3 + b_1(b_1 + 1)(b_2 + 1) = 0,$$
implying
$$b_3 = \frac{(b_1 - b_2) \pm (b_1 + b_2 + 2b_1b_2)}{2b_2}.$$

Disregarding the negative solution yields $b_3 = (b_2 + 1)b_1/b_2$.  Substituting back into (\ref{eq1:ratios}) and (\ref{eq2:ratios}) yields the desired equations. 
\end{proof}

\begin{figure}[t!]
\begin{center}
\begin{subfigure}[t]{0.3\textwidth}
\includegraphics[width=1.8in]{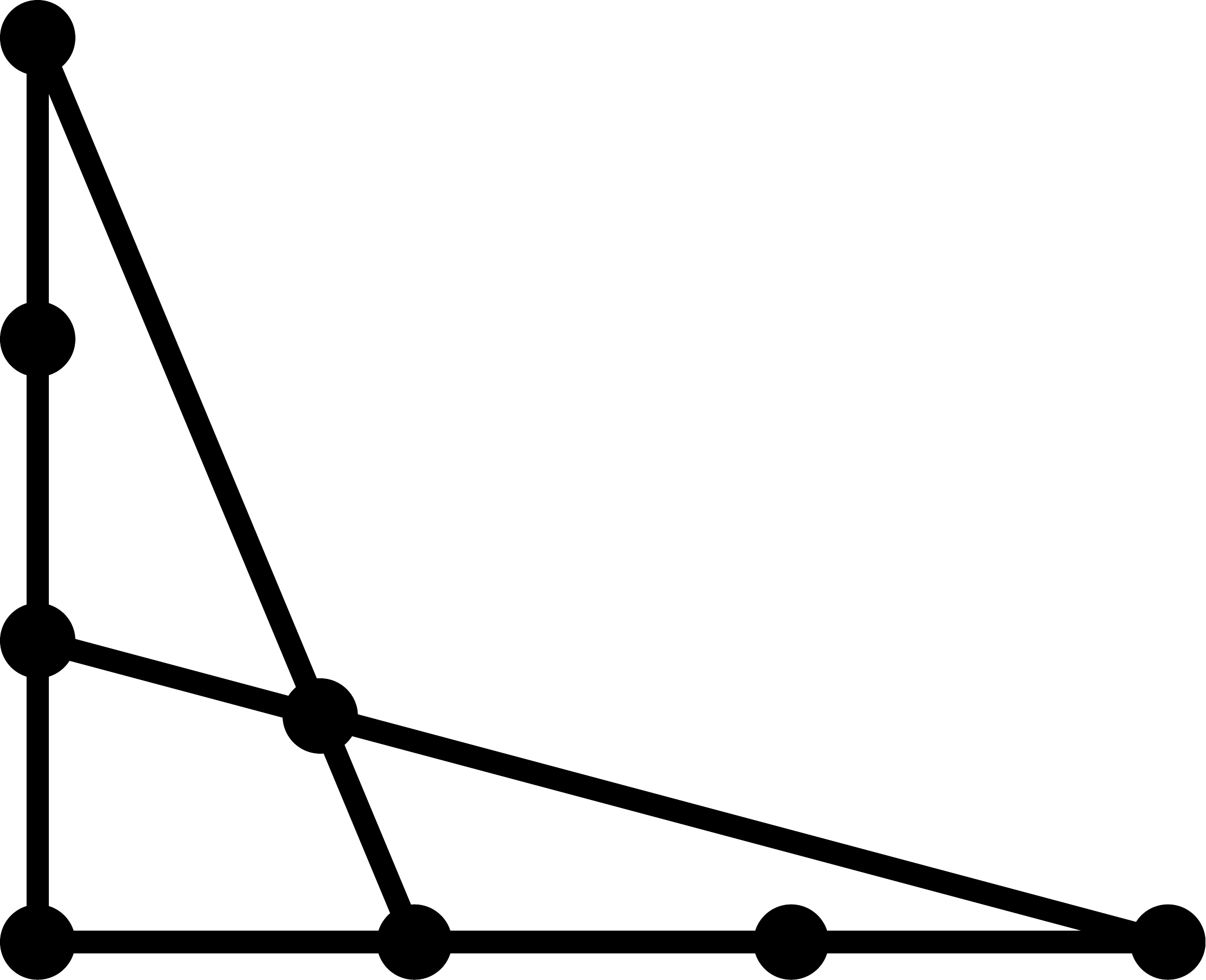}
\caption{}
\label{f:bratios22}
\end{subfigure}
\hspace{0.02\textwidth}
\begin{subfigure}[t]{0.3\textwidth}
\includegraphics[width=1.8in]{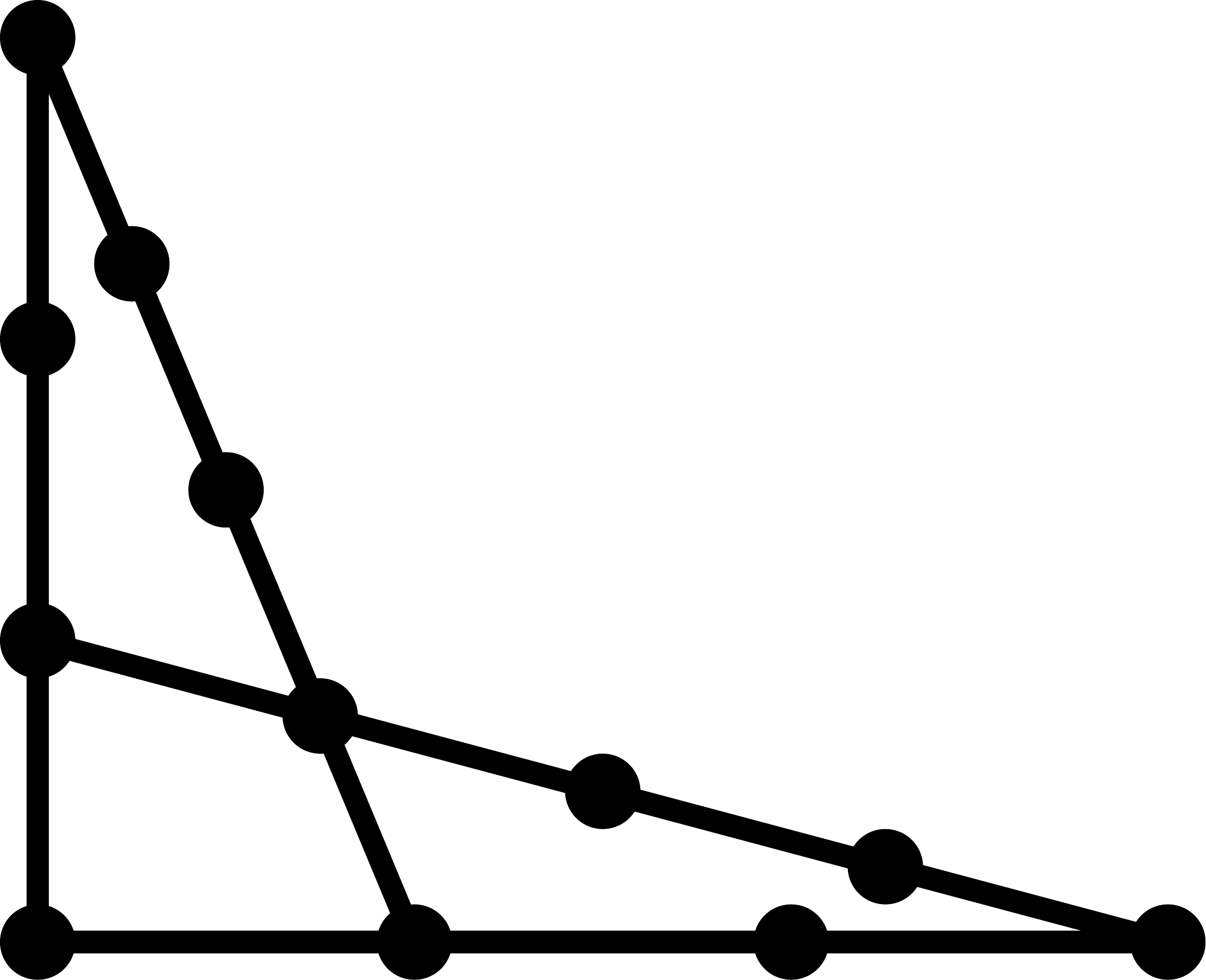}
\caption{}
\label{f:bratios2233}
\end{subfigure}
\hspace{0.02\textwidth}
\begin{subfigure}[t]{0.3\textwidth}
\includegraphics[width=1.8in]{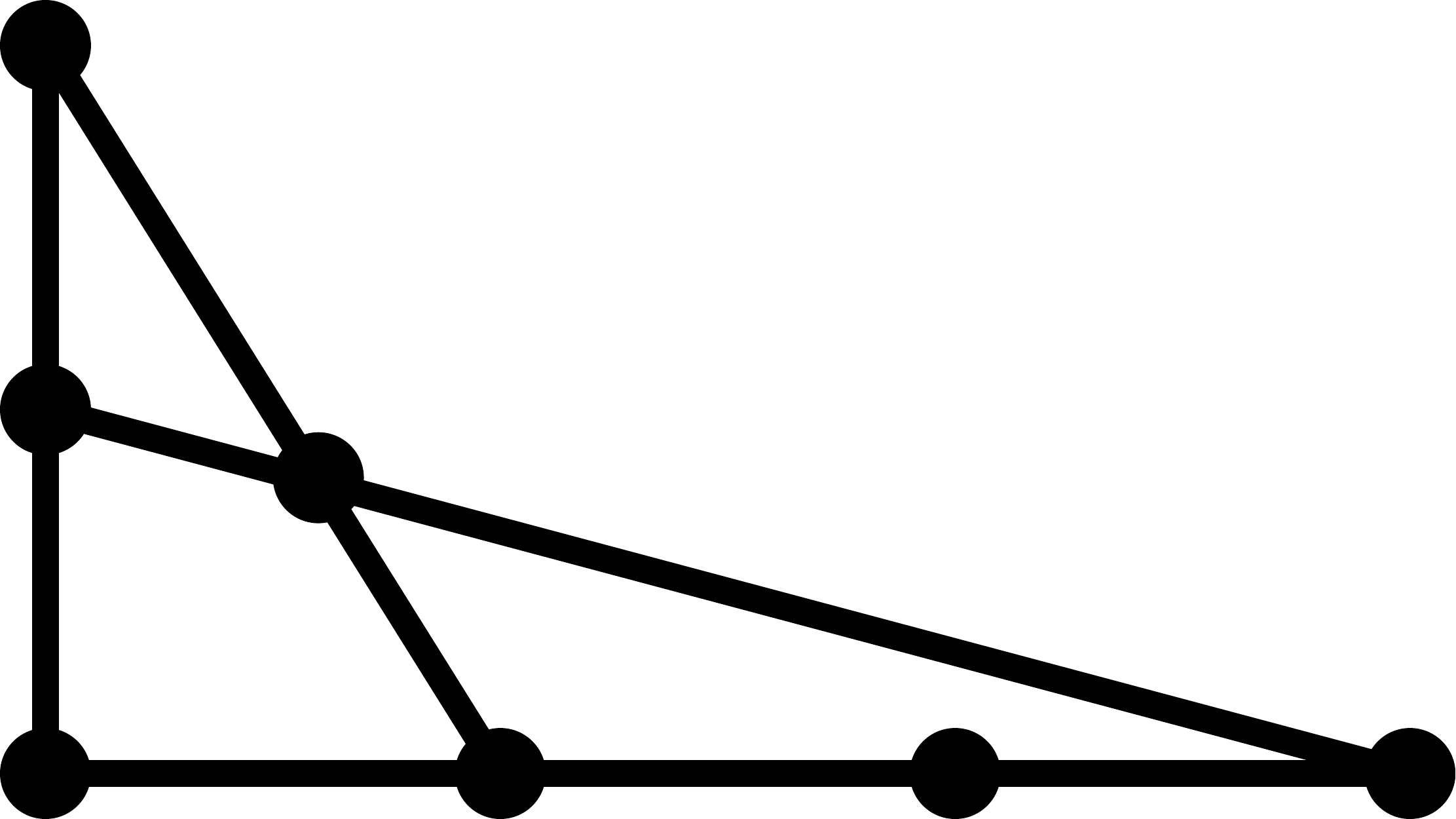}
\caption{}
\label{f:bratios12}
\end{subfigure}
\end{center}
\caption{Configurations in Example~\ref{e:bratios}.}
\end{figure}

\begin{example}\label{e:bratios}
Suppose $r = 5$, and consider the configuration in Figure~\ref{f:bratios22}.  Using the labeling in Lemma~\ref{l:bratios}, $b_1 = b_2 = 2$, which forces $b_3 = b_4 = 3$.  As such, the additional integer points depicted in Figure~\ref{f:bratios2233} must also appear in the configuration.  On the other hand, the configuration in Figure~\ref{f:bratios12} has $b_1 = 1$ and $b_2 = 2$, so by Lemma~\ref{l:bratios}, we have $b_4 = 4$, which is impossible if $r < 6$.  As such, this configuration cannot occur in any 5-segment hypergraph.
\end{example}

We are now ready to prove Theorem \ref{t:r5intersecting}.  

\begin{thm}\label{t:r5intersecting}
If $H$ is an intersecting $5$-segment hypergraph, then $\tau(H) \le 3$.  
\end{thm}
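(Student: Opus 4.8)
The plan is to show that any intersecting $5$-segment hypergraph $H$ can be covered by $3$ vertices. By Theorem~\ref{t:isolatedvertex} we already know $H$ has an isolated vertex and $\tau(H) \le r - 1 = 4$, so the task is to improve this bound by one. I would begin with the easy reductions: if all edges pass through a common vertex, then $\tau(H) = 1$; and if $H$ has few edges (say $|E(H)| \le 3$), one vertex per edge trivially covers $H$. So I may assume $H$ has no common vertex and a reasonably large number of edges, which forces the existence of triangles formed by triples of edges, exactly as in the proof of Theorem~\ref{t:isolatedvertex}.

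The core of the argument should be a structural analysis of how edges sit relative to a minimal-area triangle $T = \{a,b,c\}$ formed by three edges $e_1, e_2, e_3$, reusing the area-minimality machinery from Theorem~\ref{t:isolatedvertex}. The key new tool is Lemma~\ref{l:bratios}: whenever four pairwise-intersecting edges bound a quadrilateral $Q$ and two triangles $T_1, T_2$, the side-ratios $b_1, b_2, b_3, b_4$ satisfy $b_2 b_3 = b_1(b_2+1)$ and $b_1 b_4 = (b_1+1) b_2$, and fixing two of them determines the other two. Since each edge has exactly $r = 5$ integer points, every $b_i$ must be a ratio of small positive integers with numerator and denominator bounded in terms of $r$; as Example~\ref{e:bratios} illustrates, this severely limits the admissible configurations (for instance, $b_1 = 1, b_2 = 2$ forces $b_4 = 4$, which is impossible when $r < 6$). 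I would enumerate, up to symmetry, the possible placements of additional edges through the non-endpoint vertices of the minimal triangle, using Lemma~\ref{l:bratios} together with the constraint $r = 5$ to rule out all but finitely many local configurations.

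The main obstacle, and the bulk of the work, will be the case analysis: for each surviving configuration one must exhibit an explicit set of three vertices that covers every edge, or else derive a contradiction with the minimality of $T$ or with the integer-point count. The delicate point is handling edges that do not touch $T$ at all but are forced to cross the region it bounds; here one uses that any such edge must intersect all of $e_1, e_2, e_3$ (by the intersecting hypothesis), pinning down where it meets the configuration and hence which of the candidate cover vertices it contains. I expect the proof to proceed by identifying two ``good'' vertices of high degree whose union covers all but at most a controlled remainder of edges, then arguing that a single third vertex finishes the cover; the quantitative constraints from $r = 5$ are precisely what guarantee the remainder is coverable by one point. Because Lemma~\ref{l:bratios} reduces each local picture to a rational Diophantine condition, the analysis is finite and checkable by hand, which is exactly why the authors note the method becomes infeasible for $r > 5$.
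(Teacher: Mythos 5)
You have correctly identified Lemma~\ref{l:bratios} together with the $r=5$ integrality constraint as the engine of the proof, and your observation that each ratio $b_i$ is forced into a small set of rationals is on target: since each edge has only $5$ integer points and all intersection points are lattice points, each $b_i$ must lie in $\{\tfrac{1}{3},\tfrac{1}{2},1,2,3\}$, and up to symmetry only four tuples $(b_1,b_2,b_3,b_4)$ survive, namely $(1,1,2,2)$, $(2,2,3,3)$, $(\tfrac{1}{2},1,1,3)$, and $(\tfrac{1}{3},\tfrac{1}{2},1,2)$. But what you have written is a plan, not a proof: the entire case analysis --- enumerating these tuples, determining which lines through each configuration can still carry edges, and eliminating almost all pairs of them --- is deferred with ``I would enumerate'' and ``I expect the proof to proceed.'' That analysis \emph{is} the theorem; without it nothing has been established beyond the bound $\tau(H)\le 4$ already given by Theorem~\ref{t:isolatedvertex}.

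The deeper gap is in your endgame. You propose to exhibit, for each surviving configuration, an explicit set of three covering vertices, anchored to a minimal-area triangle as in the proof of Theorem~\ref{t:isolatedvertex}. The actual argument does neither. It first reduces to $|E|\ge 7$ via a pairing observation your proposal misses: in an intersecting hypergraph any two edges share a vertex, so pairing up the edges gives $\tau(H)\le\lceil |E|/2\rceil$, which is at most $3$ whenever $|E|\le 6$ (your reduction only disposes of $|E|\le 3$, which is too weak to support this strategy). Then, assuming $|E|\ge 7$, either a cover of size at most $3$ exists outright (all edges through one vertex, or the vertices of a triangle formed by three edges cover $H$), or one obtains four edges $Q=\{e_1,e_2,e_3,e_4\}$ satisfying the hypotheses of Lemma~\ref{l:bratios} --- produced directly, with no minimal-area machinery --- and in each of the four ratio cases one shows that at most two further edges can coexist with $Q$, forcing $|E|\le 6$, a contradiction. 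In other words, Lemma~\ref{l:bratios} is used to bound the \emph{number of edges}, not to locate a cover; your heuristic of ``two good high-degree vertices plus a third'' has no supporting argument and is not how the constraints get cashed in. To become a proof, your proposal needs both the pairing reduction and the full finite analysis carried out.
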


\begin{proof}
Let $H = (V,E)$ be an intersecting 5-segment hypergraph.  We may assume that $|E| \ge 7$, for otherwise the claim $\tau(H) \le 3$ is trivial.  If all of the edges in $H$ intersect in a single vertex, then $\tau(H) = 1$.  Otherwise, choose two edges $e_1, e_2 \in E$, and let $e_3$ be an edge not intersecting $e_1 \cap e_2$. If the three vertices of the triangle formed by $e_1, e_2, e_3$ form a cover of $H$, then again $\tau(H) \le 3$.  Otherwise, there is an edge $e_4$ not containing any of these vertices, so $Q = \{e_1,e_2,e_3,e_4\}$ is a set of four edges satisfying the conditions of Lemma~\ref{l:bratios}.  

Let $b_1, \ldots, b_4$ be the ratios defined in Lemma \ref{l:bratios} for $Q$. 
Since $r = 5$, each $b_i$ must lie in the set $\{\tfrac{1}{3}, \tfrac{1}{2}, 1, 2, 3\}$.  Since the values of $b_1$ and $b_2$ uniquely determine the values of $b_3$ and $b_4$ by Lemma~\ref{l:bratios}, trying each possible pair of values $(b_1, b_2)$ yields 
$$(1,1,2,2), \quad (2,2,3,3), \quad (\tfrac{1}{2},1,1,3), \quad \text{and} \quad (\tfrac{1}{3},\tfrac{1}{2},1,2)$$
as the only possible tuples $(b_1, b_2, b_3, b_4)$ up to symmetry.  We can now divide the proof into four cases, one for each possibility.  We include here only the case where $(b_1, b_2, b_3, b_4) = (\tfrac{1}{3},\tfrac{1}{2},1,2)$, as similar arguments yield the remaining three cases.  

\begin{figure}
\begin{center}
\includegraphics[width=2.8in]{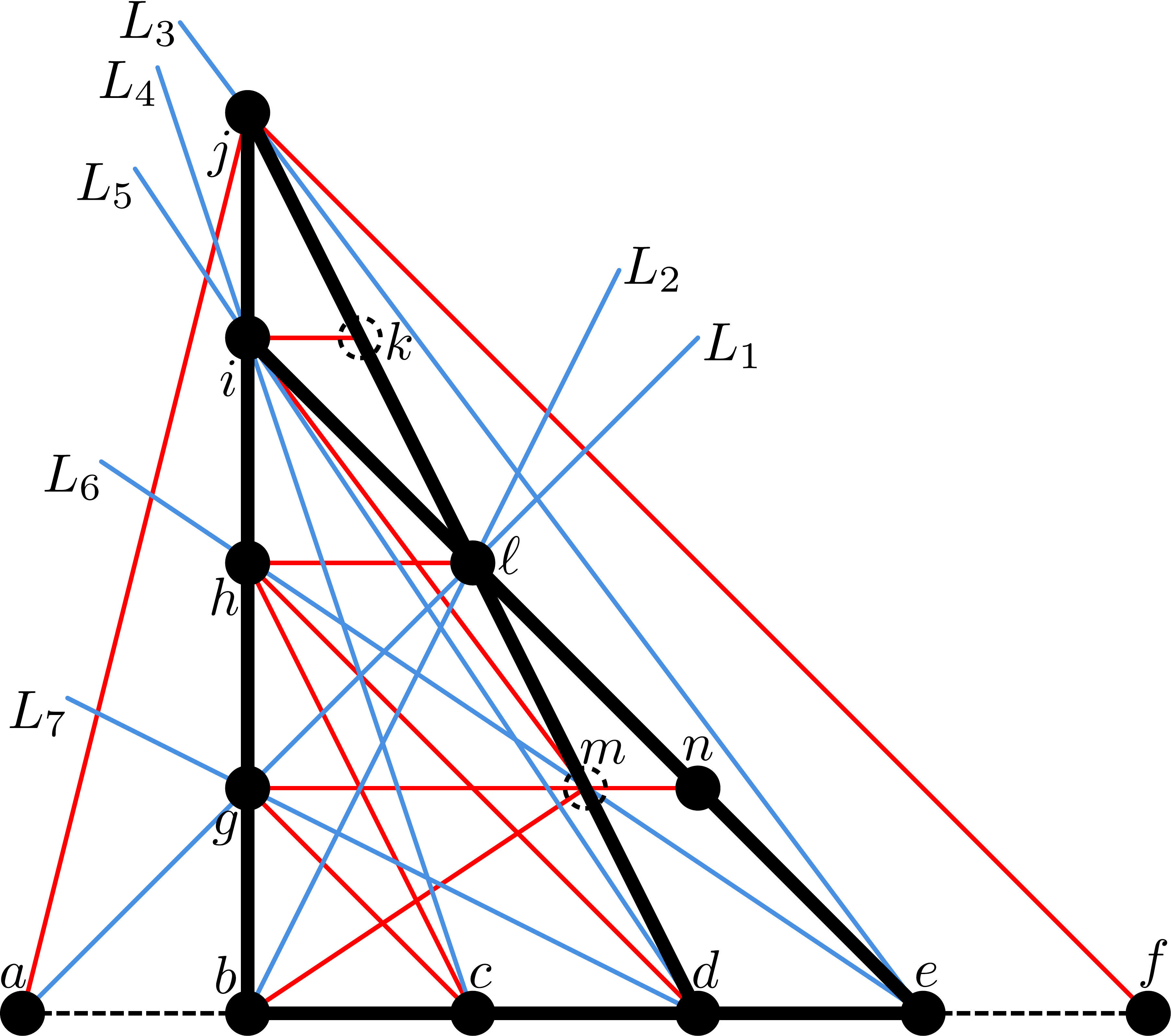}
\hspace{0.1in}
\includegraphics[width=2.8in]{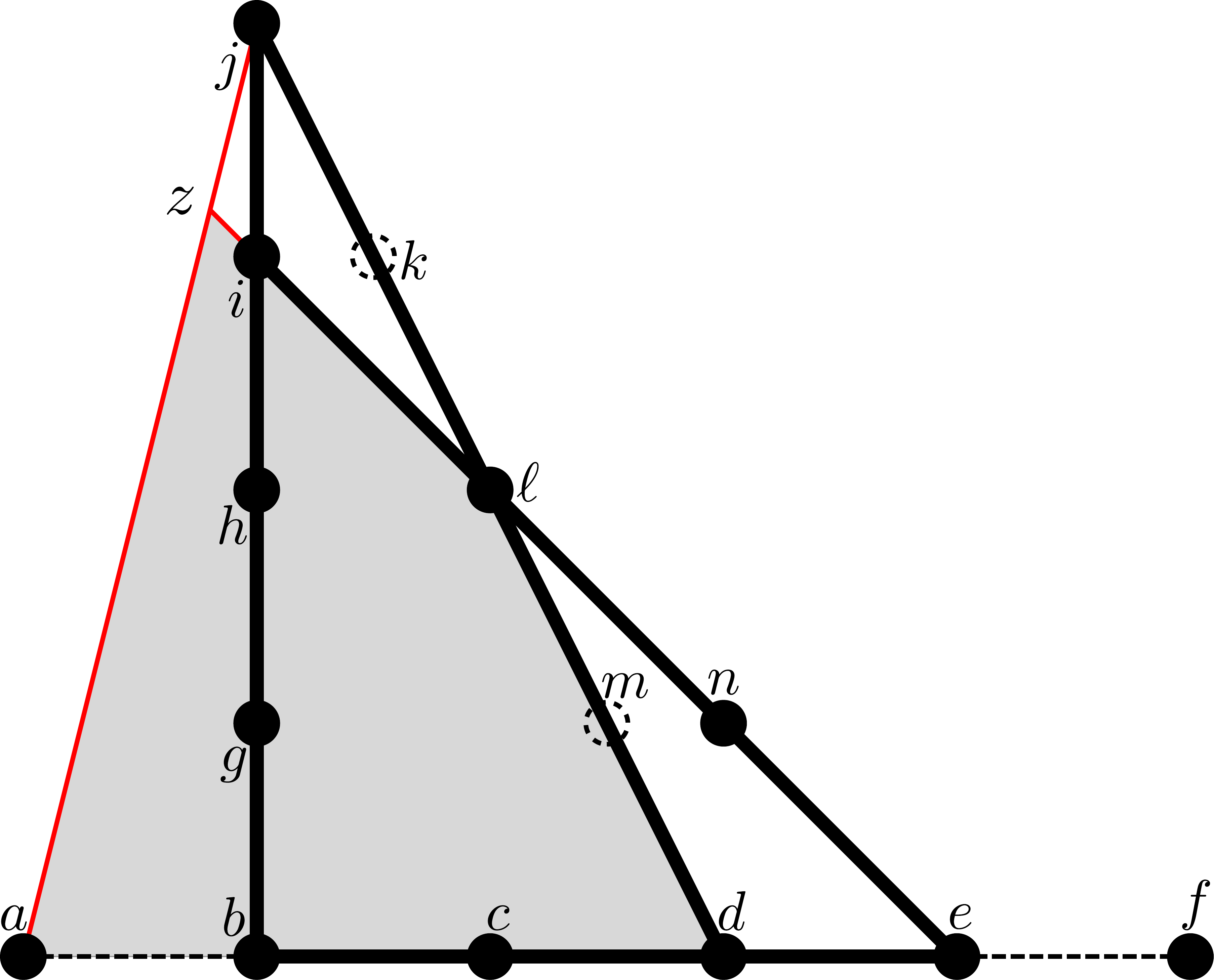}
\end{center}
\caption{
The thick lines (colored black) are the edges in $Q$, with ratios $(b_1, b_2, b_3, b_4) = (\frac{1}{3}, \frac{1}{2}, 1, 2)$.  The thin lines in the left hand image labeled $L_1, \ldots, L_7$ (colored blue) may contain additional edges of $H$, while the remaining thin lines (colored red) cannot contain any edges of~$H$.  Applying Lemma~\ref{l:bratios} to the shaded quadrilateral in the right hand image implies that $\ol{aj}$ cannot contain any edges in $H$.  
}
\label{f:r5intersecting}
\end{figure}

The thick black edges depicted in Figure~\ref{f:r5intersecting} correspond to the edges in $Q$, with corresponding ratios $(b_1, b_2, b_3, b_4) = (\tfrac{1}{3},\tfrac{1}{2},1,2)$.  We first demonstrate that any additional edges in $H$ must lie in one of the lines labeled $L_1, \ldots, L_7$ in Figure~\ref{f:r5intersecting} (i.e.\ those colored blue).  The lines $\ol{gn}$, $\ol{hl}$ and $\ol{ik}$ are parallel to $\ol{be}$ and therefore, since $H$ is intersecting, they cannot contain an edge.  Similarly, the lines $\ol{hd}$, $\ol{gc}$ and $\ol{jf}$ are parallel to  $\ol{ie}$, and $\ol{hc}$ is parallel to $\ol{jd}$, and thus cannot contain an edge.  
Applying Lemma~\ref{l:bratios} to the shaded quadrilateral in Figure~\ref{f:r5intersecting} implies the length ratio of $\ol{jz}$ to $\ol{az}$ is $\frac{1}{4}$, which is impossible if $\ol{aj}$ has only 5 consecutive integer points.  As such, we conclude the line $\ol{aj}$ cannot contain an edge.  By similar arguments, the lines $\ol{im}$ and $\ol{bm}$ cannot contain an edge since they do not intersect $\ol{be}$ and $\ol{ie}$, respectively, at integer points.  

Let $E' = E\setminus Q$.  By the above argument, $|E'| \le 7$ and each edge in $E'$ must lie in a distinct labeled line $L_1, \ldots, L_7$ in Figure~\ref{f:r5intersecting}.  We now consider which pairs of lines can simultaneously contain edges.  For instance, if $L_1$ and $L_3$ both contained edges, then applying Lemma~\ref{l:bratios} to the quadrilateral formed with $\ol{de}$ and $\ol{e\ell}$ implies the edge in $L_1$ contains more than 5 integer points.  Similarly, if $L_1$ and $L_5$ both contained edges, then applying Lemma~\ref{l:bratios} to the quadrilateral formed with $\ol{de}$ and $\ol{e\ell}$ implies $L_1$ contains more than 5 integer points.  Continuing in this way, the only pairs of lines that can contain two distinct edges is $E'$ are the following:
$$\begin{array}{l@{\qquad}l@{\qquad}l@{\qquad}l@{\qquad}l}
\{L_1, L_2\}, &
\{L_1, L_4\}, &
\{L_1, L_7\}, &
\{L_2, L_6\}, &
\{L_3, L_5\}, \\
\{L_3, L_6\}, &
\{L_3, L_7\}, &
\{L_4, L_5\}, &
\{L_5, L_7\}, &
\{L_6, L_7\}.
\end{array}$$
From this, we must have $|E'| \le 3$, as the maximal sets of lines that could simultaneously contain edges in $E'$ are $\{L_3, L_5, L_7\}$ or $\{L_3, L_6, L_7\}$.  However, both triples require $L_3$ to contain more than 5 integer points by Lemma~\ref{l:bratios}.  We conclude that $H$ contains at most 6 edges, which completes the proof of this case.  
\end{proof}

\begin{remark}\label{r:intersectinglinecount}
One consequence of the proof of Theorem~\ref{t:r5intersecting} is that any intersecting 5-segmant hypergraph has at most 6 edges if it contains a triangle.  Indeed, in any such configuration, the degree of each vertex $v$ is at most $r$ since every edge through $v$ must intersect a edge not containing $v$ at a different vertex.  As such, any configuration with more than $r+1$ edges contains a configuration of the form in Lemma~\ref{l:bratios}, and the proof of Theorem~\ref{t:r5intersecting} verifies that no more than 2 edges can be added in each case.  One configuration achieving the maximum 6 edges is illustrated in Figure~\ref{f:r5intersectingmonsters}.  
\end{remark}

\begin{question}\label{q:intersectinglinecount}
What is the maximum number of edges that can occur in an intersecting $r$-segment hypergraph containing a triangle?  
\end{question}

\section{Fractional covers and matchings in $r$-segment hypergraphs}
\label{sec:fractional}

Let $H = (V,E)$ be a hypergraph.  A function $f:E \to \R_{\ge 0}$ is a \emph{fractional matching} if $\sum_{e \ni v} f(e) \le 1$ for every $v \in V$.  We denote by 
$$\nu^*(H) = \max \bigg\{\!\sum_{e \in E} f(e) : f \text{ is a fractional matching of } H\!\bigg\}$$
the \emph{fractional matching number} of $H$.  Similarly, a \emph{fractional cover } is a function $g:V \to \R_{\ge 0}$ such that $\sum_{v \in e} g(v) \ge 1$ for every $e \in E$.  We denote by 
$$\tau^*(H) = \min \bigg\{\!\sum_{v \in V}^{\vspace{-0.01in}} g(v) : g \text{ is a fractional cover of } H\!\bigg\}$$
the \emph{fractional covering number} of $H$.  In every hypergraph $H$, we have
$$\nu(H) \le \nu^*(H) = \tau^*(H) \le \tau(H),$$
where the equality $\nu^*(H) = \tau^*(H)$ follows from linear programming duality.  Moreover, the \emph{complementary slackness condition} \cite{Krivelevich} asserts that if some minimum factional cover $g:V \to \R_{\ge 0}$ satisfies $g(u) > 0$ for some $u \in V$, then every maximum fractional matching $f:E \to \R_{\ge 0}$ satisfies $\sum_{e \ni u} f(e) = 1$.  

\begin{thm}\label{t:fractional}
If $H$ is an $r$-segment hypergraph with $r \ge 3$, then $\tau^*(H) \le (r-1)\nu(H)$ and $\tau(H) \le (r-1)\nu^*(H)$.    
\end{thm}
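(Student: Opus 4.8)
We must prove two inequalities for an $r$-segment hypergraph $H$ with $r \ge 3$: namely $\tau^*(H) \le (r-1)\nu(H)$ and $\tau(H) \le (r-1)\nu^*(H)$.

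The plan is to derive both inequalities from the coloring bound in Theorem~\ref{t:weakcoloring}, which for $r \ge 4$ gives $\chi(H) = 2$ and for $r = 3$ gives $\chi(H) \le 3$. The key reduction is the following standard fact: if $H$ is $k$-colorable, then its vertex set partitions into $k$ independent sets $V_1, \ldots, V_k$ (color classes). Since no edge is monochromatic, each edge meets at least two distinct color classes, hence the complement $V \setminus V_i$ of each class is a cover. I would use this to relate the fractional parameters to the chromatic number via a \emph{weighting/averaging argument}.

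For the first inequality, I would start from a maximum fractional matching $f$ with $\sum_{e} f(e) = \nu^*(H)$. For each color class $V_i$, the set $V \setminus V_i$ is a cover, so the \emph{integral} covering number satisfies $\tau(H) \le |V \setminus V_i|$ for whichever class is largest — but to get the fractional statement I instead want to build an explicit fractional object. The cleaner route: since the color classes $V_1, \dots, V_k$ cover every edge at least twice, the function $g = \tfrac{k-1}{k}\cdot(\text{indicator built from the classes})$ can be arranged to be a fractional cover. More directly, I would show $\tau^*(H) \le \tfrac{k-1}{k}\,|V|$ is \emph{not} what we want; rather, the correct approach is to bound $\tau^*/\nu$ and $\tau/\nu^*$ by $r-1$ by invoking that for $r \ge 4$ we have $k = 2$, giving ratios at most $1 \le r-1$, while for $r = 3$ we have $k = 3$ and must show the ratio is at most $2 = r-1$. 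The heart of the matter is therefore to prove, for any $k$-colorable hypergraph, that $\tau^*(H) \le (k-1)\nu(H)$ and $\tau(H) \le (k-1)\nu^*(H)$.

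The main obstacle — and where I would concentrate effort — is establishing the clean ratio bound $\tfrac{\tau^*}{\nu} \le k-1$ and $\tfrac{\tau}{\nu^*} \le k-1$ from $k$-colorability alone. For the second inequality $\tau(H) \le (r-1)\nu^*(H)$, I would take a maximum fractional matching $f$, and for each color class $V_i$ consider the total $f$-weight $\sum_{v \in V_i}\sum_{e \ni v} f(e) \le |V_i|$; summing over a suitable subcollection of $k-1$ classes (all but the one carrying the least weight) should produce a cover whose size is controlled, since each edge is counted in at least $k-1$ of the $k$ classes it could be assigned to. I expect the decisive step to be an averaging argument: picking the $k-1$ color classes that together form a cover and bounding their total contribution against $\nu^*$ via the fractional-matching constraint at each vertex, then invoking $k-1 \le r-1$ from Theorem~\ref{t:weakcoloring}. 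The symmetric LP-duality identity $\nu^* = \tau^*$ can then be used to transfer the bound between the two stated inequalities, so that proving one ratio bound cleanly yields the other.
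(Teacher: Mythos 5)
Your proposal reduces the entire theorem to the claim that every $k$-colorable hypergraph satisfies $\tau^*(H) \le (k-1)\nu(H)$ and $\tau(H) \le (k-1)\nu^*(H)$, and that claim is false: colorability alone cannot control these ratios. The simplest counterexample is itself a $3$-segment hypergraph: take the three segments $\{(0,0),(1,0),(2,0)\}$, $\{(0,0),(0,1),(0,2)\}$, $\{(2,0),(1,1),(0,2)\}$, which pairwise intersect at the three corner points. Coloring the corners with one color and the midpoints with the other shows it is $2$-colorable, yet $\nu = 1$, $\tau = 2$, and $\nu^* = \tau^* = \tfrac{3}{2}$ (put weight $\tfrac{1}{2}$ on each edge, respectively on each corner). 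So with $k = 2$ both of your claimed inequalities fail: $\tau^* = \tfrac{3}{2} > 1 = (k-1)\nu$ and $\tau = 2 > \tfrac{3}{2} = (k-1)\nu^*$. For $r \ge 4$ the situation is the same: the paper's own Example~\ref{e:r4intersecting} has $\chi(H) = 2$ by Theorem~\ref{t:weakcoloring}, but $\tau = 3$ and $\nu^* = \tau^* = 2$, again violating your lemma. The conceptual reason the reduction fails is that color classes only yield covers of the form $V \setminus V_i$, so $k$-colorability bounds $\tau$ relative to $|V|$ (this is exactly Corollary~\ref{c:weakcoloring}, $\tau \le \tfrac{k-1}{k}|V|$); it says nothing about $\tau$ relative to $\nu$ or $\nu^*$, which can be far smaller than $|V|/k$.

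The paper's proof requires two ingredients absent from your sketch. For $\tau^*(H) \le (r-1)\nu(H)$ it invokes F\"uredi's theorem (Theorem~\ref{t:furedi}), whose hypothesis --- that $H$ contains no copy of the $r$-uniform projective plane --- is verified via Theorem~\ref{t:isolatedvertex}: such a copy would be an intersecting $r$-segment hypergraph and hence would have an isolated vertex, which projective planes do not. For $\tau(H) \le (r-1)\nu^*(H)$ it takes a counterexample with fewest edges and argues by LP duality and complementary slackness: either some vertex has $g(u) = 0$ in a minimum fractional cover, in which case deleting a vertex of weight at least $\tfrac{1}{r-1}$ contradicts minimality, or else $g > 0$ everywhere, and then complementary slackness forces $\sum_{e \ni u} f(e) = 1$ at every vertex, whence $|V| = r\nu^*(H)$. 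Only at that point, when $|V|$ has been pinned to $r\tau^*(H)$, does the coloring bound $\tau(H) \le \tfrac{2}{3}|V|$ enter, giving $\tau/\tau^* \le \tfrac{2r}{3} \le r-1$. So the coloring theorem is the final step of one branch of a minimal-counterexample argument, not a substitute for it; your averaging sketch has no mechanism for tying $|V|$ to $\nu^*$, and that is precisely the missing idea.
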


In the proof of Theorem \ref{t:fractional}, we will use a theorem of F\"uredi.  

\begin{thm}[{\cite{furedi}}]\label{t:furedi}
If every edge of a hypergraph $H$ has at most $r \ge 3$ vertices, and $H$ does not contain a copy of the $r$-uniform projective plane, then
$\nu(H) \ge \frac{\nu^*(H)}{r-1}.$
\end{thm}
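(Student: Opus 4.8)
The statement is F\"uredi's theorem, and I would treat it as a genuine extremal result rather than a short deduction. The overall plan is to work on the covering side: by linear programming duality (recorded just above the statement) we have $\nu^*(H) = \tau^*(H)$, so it is equivalent to produce, whenever $H$ contains no copy of the $r$-uniform projective plane, a \emph{fractional cover} of total weight at most $(r-1)\nu(H)$. First I would reduce to the case where $H$ is exactly $r$-uniform: enlarge every edge of size less than $r$ by adjoining brand-new private vertices (one fresh vertex per edge, never reused). Each such vertex has degree $1$, so it can belong to no projective plane and its matching constraint $w(e)\le 1$ is redundant; thus $\nu$, $\nu^*$, and the ``no projective plane'' hypothesis are all unchanged while every edge now has size exactly $r$. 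I would then argue by induction on $\nu(H)$, isolating $\nu(H)=1$ (the intersecting case) as the base and the crux.

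For the base case I want the following: an intersecting $r$-uniform family $\mathcal F$ satisfies $\tau^*(\mathcal F)\le r-1$ unless it contains a projective plane of order $r-1$. The first observation is that $\tau^*(\mathcal F) > r-1$ forces $\tau(\mathcal F)=r$: any single edge is a cover, so $\tau \le r$, while $\tau^* \le \tau$ gives $\tau > r-1$, whence $\tau=r$. So $\mathcal F$ is a $\tau$-critical intersecting family of covering number exactly $r$. The heart of the argument is to show that tightness of the fractional parameter is rigid enough to force the incidence structure of $\mathrm{PG}(2,r-1)$. I would take an optimal fractional matching $w$ and double count, comparing $\sum_{e \ni v} w(e) \le 1$ over all vertices against $\sum_e |e|\,w(e) = r\,\nu^*(\mathcal F)$, and use that every pair of edges meets and that no point can be covered cheaply (since $\tau^*>r-1$) to be driven toward a configuration in which each point lies on exactly $r$ edges and each pair of edges meets in exactly one point. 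The relevant numerology is that $\mathrm{PG}(2,r-1)$ has $r^2-r+1$ points and lines with $\nu=1$ and $\nu^* = (r^2-r+1)/r = r-1+\tfrac1r$, so it sits exactly one $\tfrac1r$ above the threshold $r-1$ and is the unique obstruction there.

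For the inductive step I would pass from $\nu(H)=k\ge 2$ to smaller matching number. Fix a maximum matching $M=\{e_1,\dots,e_k\}$ and set $U=\bigcup_i e_i$, a set of at most $rk$ vertices meeting every edge. Edges meeting $U$ in two or more vertices are cheap to cover fractionally (spreading weight $\tfrac12$ over $U$ already handles them within budget, since $rk/2 \le (r-1)k$ for $r\ge 2$), so the only obstruction comes from ``pendant'' edges touching $U$ in a single vertex. I would group the pendant edges by the vertex of $U$ they hit, analyze the intersecting subfamily each such bundle spans, apply the base case locally, and assemble the local fractional covers of value $\le r-1$ into a global one of value $\le (r-1)k$ --- again with the projective plane as the only configuration blocking the improvement from $r$ to $r-1$.

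The step I expect to be the main obstacle is the rigidity argument in the base case: upgrading the purely numerical strict inequality $\tau^*(\mathcal F) > r-1$ to the exact combinatorial conclusion that $\mathcal F$ contains a projective plane. A crude double count only yields the weaker, exclusion-free bound $\nu^* \le (r-1+\tfrac1r)\nu$; sharpening it to $(r-1)\nu$ away from the projective plane requires pinning down the incidence geometry precisely, for which I would lean on the complementary slackness condition recorded above to control the supports of the optimal fractional matching and cover simultaneously. By comparison, the bookkeeping that glues the local intersecting analyses into a global fractional cover in the inductive step is delicate but, I expect, routine once the base case is secured.
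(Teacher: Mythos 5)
First, a point of comparison: the paper does not prove Theorem~\ref{t:furedi} at all --- it is imported verbatim from F\"uredi \cite{furedi} and used as a black box in the proof of Theorem~\ref{t:fractional}. So your proposal has to stand as a self-contained proof of F\"uredi's theorem, and it does not. Your uniformization step is fine: padding short edges with private degree-one vertices preserves $\nu$, $\nu^*$ (the new constraints $w(e)\le 1$ are redundant), and projective-plane-freeness, since every point of the $r$-uniform projective plane has degree $r\ge 3$ inside the copy. But the base case --- that an intersecting $r$-uniform family with $\tau^*>r-1$ must contain $\mathrm{PG}(2,r-1)$ --- \emph{is} the content of F\"uredi's theorem, and you leave it at ``be driven toward a configuration.'' Double counting against $\sum_e |e|\,w(e)=r\nu^*$ plus complementary slackness yields degree and tightness information, but upgrading that to the exact incidence geometry (every two edges meet in exactly one point, every point on exactly $r$ edges, and the structure closing up into a projective plane) is precisely where the difficulty lives; nothing in the sketch carries it out, as you acknowledge.

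The inductive step, which you call ``delicate but routine,'' fails on arithmetic, not on bookkeeping. Putting weight $\tfrac12$ on each vertex of $U$ costs up to $rk/2$ and covers only the edges meeting $U$ at least twice. A pendant edge at $v\in U$ receives just $\tfrac12$, and the cheapest completion is to raise $g(v)$ to $1$, an extra $\tfrac12$ per loaded vertex; since $U$ may have $rk$ such vertices, the assembly lands at total weight $rk$ --- the trivial bound --- rather than $(r-1)k$. Nor can the base case be applied ``locally'': the pendant edges at $v$ all contain $v$, so they are trivially covered at cost $1$, while their traces off $v$ form an $(r-1)$-uniform family that need not be intersecting, so the $r$-uniform intersecting lemma has no purchase; and pendant edges at distinct vertices of $U$ need not intersect each other at all. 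Note also that your sketch never actually invokes the inductive hypothesis on a hypergraph of smaller matching number --- no subhypergraph with $\nu\le k-1$ is ever produced. The natural attempt (delete an edge $e$ of a maximum matching together with all edges meeting it, induct on the remainder with $\nu\le k-1$, and pay $r$ for the star of $e$) gives $(r-1)(k-1)+r=(r-1)k+1$, off by exactly one, and the star of $e$ is not an intersecting family, so the base case cannot shave the excess there either. F\"uredi's actual argument routes around this via his structure theory of $\nu^*$-critical hypergraphs and optimal fractional matchings with specially structured supports; absent something of that strength, both load-bearing steps of your plan remain open.
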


\begin{proof}[Proof of Theorem \ref{t:fractional}]
By Theorem~\ref{t:furedi}, for the first statement of the theorem it is enough to show that $H$ does not contain a copy of the $r$-uniform projective plane.  Suppose to the contrary that $H$ contains a subhypergraph $H'$ isomorphic to the $r$-uniform projective plane. Since $H'$ is an intersecting $r$-segment hypergraph, Theorem~\ref{t:isolatedvertex} implies $H'$ contains an isolated vertex, contradicting the fact that each vertex in the $r$-uniform projective plane belongs to $r$ edges. 

Suppose $H = (V,E)$ has the minimal number of edges such that 
$$\tau(H) > (r-1)\nu^*(H) = (r-1)\tau^*(H).$$
Let $f:E \to \RR_{\ge 0}$ and $g:V \to \RR_{\ge 0}$ be a maximal fractional matching and a minimal fractional cover of $H$, respectively. By removing vertices from $V$ if necessary, we may assume that every vertex in $V$ belongs to some edge. 

Suppose first that some vertex $u \in V$ has $g(u) = 0$.  Given an edge $e$ containing~$u$, we have $\sum_{e \ni v} g(v) \ge 1$ since $g$ is the fractional cover, but $g(u) = 0$, so there must be a vertex $v \in e$ with $g(v) \ge \frac{1}{r-1}$.  
Consider the hypergraph $H'$ obtained from $H$ by removing $v$ and all the edges containing it. 

Clearly $\tau(H') \ge \tau(H)-1$, since otherwise a minimal cover in $H'$ together with $v$ is a cover of $H$ of size smaller than $\tau(H)$.  Moreover, by the minimality of $H$, we have $\tau(H') \le (r-1)\tau^*(H')$, and since the restriction of $g$ to $V(H')$ is a fractional cover of~$H'$, we have $\tau^*(H') \le \tau^*(H) - g(v)$.  Thus, 
$$\tau^*(H') \le \tau^*(H) - g(v) < \frac{\tau(H)}{r-1} - \frac{1}{r-1} = \frac{\tau(H)-1}{r-1} \le \frac{\tau(H')}{r-1} \le \tau^*(H'),$$
which is a contradiction.
 
We may now assume $g(u)>0$ for all $u \in V$ (if the second inequality does not hold for some $u\in V$  then we can repeat the argument above by removing $u$ from $H$).  By the complementary slackness conditions, each vertex $u$ satisfies $\sum_{e \ni u} f(e) = 1$, so
$$|V| = \sum_{u \in V} 1 =  \sum_{u \in V}\sum_{e \ni u} f(e) = \sum_{e \in E} rf(e) = r\nu^*(H) =  r\tau^*(H).$$
We conclude $\tau^*(H) = \frac{1}{r}|V|$, but since $\tau(H)\le \frac{2}{3}|V|$ by Corollary \ref{c:weakcoloring}, we have 
$$\frac{\tau(H)}{\tau^*(H)} \le \frac{\frac{2}{3}|V|}{\frac{1}{r}|V|} = \frac{2r}{3} \le r-1,$$
for every $r \ge 3$. This concludes the proof of the theorem.
\end{proof}

\subsection*{Acknowledgements}
The first author was supported by PASPA (UNAM) and CONACYT during her sabbatical visit, as well as by Proyecto PAPIIT 104915, 106318 and CONACYT Ciencia B\'asica 282280.   The first author would also like to express her appreciation of the UC Davis Mathematics department's hospitality during her visit.


\end{document}